\documentclass[journal]{IEEEtran}
\usepackage{enumerate}
\usepackage{amsthm}
\usepackage{amsmath}
\usepackage{amscd}
\usepackage{amssymb}
\usepackage{latexsym}
\usepackage{array}
\usepackage{tabularx}
\usepackage{setspace}
\usepackage[hyphens,spaces]{url}
\usepackage{cite}
\usepackage{multirow}
\usepackage[table]{xcolor}

\usepackage{graphicx,subfigure}
\usepackage{bbm}
\usepackage[numbers, square, comma, compress]{natbib}
\newtheorem{lem}{Lemma}
\newtheorem{corol}{Corollary}
\newtheorem{theorem}{Theorem}

\theoremstyle{definition}
\newtheorem{rem}{Remark}

\begin{document}

\title{Wang Algebra: From Theory to Practice}

\author{Bob Ross, \textit{Life Member, IEEE}, and Cong Ling, \textit{Member, IEEE}
\thanks{
{This work was presented in part in Asian IBIS Summit 2007 and Virtual European IBIS Summit 2022.}

B. Ross is with Teraspeed Labs, 10238 SW Lancaster Road, 97219, USA, (e-mail: bob@teraspeedlabs.com).

C.  Ling  is  with  the  Department  of  Electrical  and  Electronic  Engineering, Imperial College London, London SW7 2AZ, UK (e-mail: cling@ieee.org).
	}
}	

\maketitle

%

\begin{abstract}
Wang algebra was initiated by Ki-Tung Wang as a short-cut method for the analysis of electrical networks. It was later popularized by Duffin and has since found numerous applications in electrical engineering and graph theory. This is a semi-tutorial paper on Wang algebra, its history, and modern applications. We expand Duffin's historic notes on Wang algebra to give a full account of Ki-Tung Wang's life. A short proof of Wang algebra using group theory is presented.  We exemplify the usefulness of Wang algebra in the design of T-coils. Bridged T-coils give a significant advantage in bandwidth, and were widely adopted in Tektronix oscilloscopes, but design details were guarded as a trade secret. The derivation presented in this paper, based on Wang algebra, is more general and simpler than those reported in literature. This novel derivation has not been shared with the public before.

\end{abstract}

\begin{IEEEkeywords}
Bandwidth extension, bridged-T networks, matrix determinant, peaking, T-coil, Wang algebra, wideband amplifier.
\end{IEEEkeywords}

\section{Introduction}

Wang algebra is a commutative algebra $\mathbb{W}$ with the properties
\begin{equation}\label{eq:Wang-rule}
    x+x = 0, \ \ x^2 = 0
\end{equation}
for all $x \in \mathbb{W}$. It was proposed by Ki-Tung Wang in 1934 as a convenient rule to simplify the analysis of electrical networks \cite{Wang}. In essence, it is a clever method to compute the determinant of a symmetric matrix in the context of solving a system of simultaneous linear equations. A usual approach in the pre-computer era would be to use Cramer's rule, but calculating the determinants by hand is not only tedious, but also highly likely to run into mistakes, due to the many terms involved. In Wang algebra, a large number of terms vanish due to \eqref{eq:Wang-rule}, thus greatly reducing the calculation complexity \cite{Ting,Tsai,Chow,KU,BELLERT}. Wang algebra has been a useful method to design electronic circuits and modern integrated circuits (ICs), in particular T-coils and interconnects \cite{Ross94,Ross07}. Yet, the impact of Wang algebra goes much beyond electrical engineering. It has also found applications in networking and graph theory \cite{Chen-Graph}. Historically, Wang algebra was studied in great detail by famous mathematicians Wei-Liang Chow \cite{Chow} and Richard Duffin \cite{Duffin,Duffin-Morley}\footnote{The works by Chow \cite{Chow} and Duffin \cite{Duffin} themselves deserve special remarks.

Wei-Liang Chow, who had studied algebraic geometry under the great algebraist van der Waerden, was running a business in Shanghai, China, to support his family. Shanghai was occupied by Japanese troops during World War II, and as a result he had lost his academic job in the National Central University. Chow could hardly do any research during this war period (1937-1945), and the study of Wang algebra \cite{Chow} was an exception (and in fact his only work on an engineering problem). Amazingly, after almost a decade in business, he made a strong return to research and became a prominent algebraic geometer in the US. Chow ring, a fundamental concept in algebraic geometry, is named after him.

Richard Duffin is well known for his contributions to electrical networks and geometric programming. Together with Albert Schaeffer, he introduced the concept of frames, which play an important role in signal processing. The Duffin-Schaeffer conjecture, concerning rational approximation of real numbers, is a famous conjecture in number theory that had stood open for nearly 80 years. It was finally proved by Dimitris Koukoulopoulos and James Maynard in 2019. Maynard won a Fields Medal partly because of this proof.}.

Somewhat mysteriously, Duffin gave the following comment in \cite{Duffin74}:

``K. T. Wang managed an electrical power plant in China, and in his spare time sought simple rules for solving the network equations... Wang could not write in English so his paper was actually written by his son, then a college student."

However, the comment by Duffin seemed incomplete and did not convey Ki-Tung Wang's full story. The first aim of this article is to provide an expanded biography of Ki-Tung Wang.

\subsection{Biography of Ki-Tung Wang}

\begin{figure}
  \centering
  \includegraphics[width=8cm]{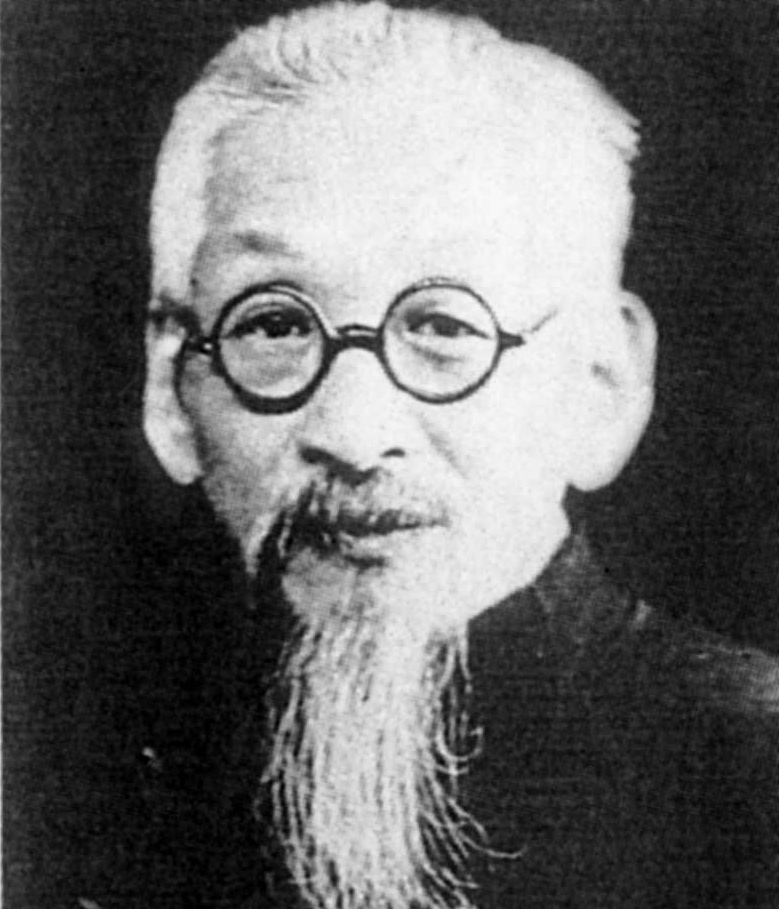}\\
  \caption{Ki-Tung Wang (1875-1948), Chinese mathematician, electrical engineer and philosopher. Photo courtesy of Dr Jin-Hai Guo, Institute for the History of Natural Sciences, Chinese Academy of Sciences.}\label{fig:KTWang}
\end{figure}

Ki-Tung Wang (1875-1948),
shown in Fig.~\ref{fig:KTWang}, was a Chinese mathematician, electrical engineer and philosopher. Believed to be the first Chinese scholar to publish a mathematical paper in an international journal, he is well known for his work on Wang algebra, as well as investigation on the relationship between sciences and Buddhism \cite{Guo2015}.

Ki-Tung Wang was born into a prominent family in Suzhou, Jiangsu Province in 1875. His ancestor Ao Wang (1450-1524) was ranked no. 3 in the Imperial Examination and later became a Grand Secretary of the Cabinet (equivalent to Prime Minister) of the Ming Dynasty. His father Song-Wei Wang (1849-1895) was also a Jinshi (Imperial Scholar), the highest degree of Imperial Examination in China.

In 1895, Ki-Tung Wang graduated from the Imperial Tungwen College\footnote{It literally means ``Multilingual College".}
(modern-day Peking University), then was hired as a mathematical lecturer there. He had already published several Chinese articles on mathematics, and investigated the relation between classical Chinese mathematics and modern mathematics \cite{Guo15}.

In 1909, Ki-Tung Wang served as an administrator of Chinese students in Europe, then he did internships at the English Electrical Company and Siemens \cite{Guo2015}. During this period, he published a paper on the differentiation of quaternionic functions\footnote{Quaternions are extension of complex numbers, with lots of engineering applications such as astronautics, robotics, computer visualization, animation, special effects in movies, navigation, etc.
It is worth mentioning that differentiation of quaternionic functions is highly nontrivial---even more involved than the Cauchy-Riemann condition for complex functions.} in the Proceedings of the Royal Irish Academy \cite{KTWang-Quaternion}, which is believed to be the first paper published by Chinese mathematicians in international journals \cite{Guo02,Guo2015}.

In 1912, Ki-Tung Wang was offered a job at the Ministry of Education, then in as part of the newly founded Republic of China \cite{Guo2015}.

In 1914, Ki-Tung Wang went to industry and became an electrical engineer at the Zhenjiang Power Plant, Jiangsu Province \cite{Guo2015}.

In 1928, Ki-Tung Wang was appointed Research Fellow\footnote{Equivalent to Principal Scientist nowadays.} at the National Research Institute of Engineering, Academia Sinica \cite{Guo2015}. He proposed a new method to derive the impedance of electrical networks, which is sometimes advantageous to the traditional method \cite{Wang,Guo03b}. In 1950, Duffin and Bott recognized that his rules form an algebra and presented this method to the American Mathematical Society, under the title ``The Wang algebra of networks" \cite{Duffin}\footnote{Duffin himself cited in \cite{Duffin} H. W. Becker's 1948 unpublished notes on Wang algebra.}. It is unclear how Duffin learned the story that Wang could not write in English\footnote{One possibility is that Duffin might have met Ki-Tung Wang's daughter or son. Several children of Ki-Tung Wang earned Ph.D. degrees in the US; in particular, his son Shou-Jin Wang (1904-1984) obtained his Ph.D. in physics from Columbia University, worked at Peking University and retired from MIT Lincoln Lab (https://www.guokr.com/article/441034/). It is likely that Duffin had met Shou-Jin Wang.}. This was unlikely to be true, since Ki-Tung Wang had learned English at the Imperial Tungwen College \cite{Guo03}, worked in Britain, and published an English paper \cite{KTWang-Quaternion} before. Nevertheless, it is possible that Ki-Tung Wang needed some help on English writing, since it had been more than 20 years since the publication of \cite{KTWang-Quaternion}.

Ki-Tung Wang was very interested in philosophy beyond the limits of modern sciences. He had several publications on the relationship between sciences and Buddhism, including a book \textit{Comparative Study of Buddhism and Sciences} \cite{WangBook}.

\subsection{From Old Theory to Modern Applications}

Nearly 90 years have passed since Wang algebra was proposed. It has found numerous applications in areas ranging from graph theory to electrical and electronic engineering. This article serves as a survey of the history, theory and modern applications of Wang algebra.

As an example of applications, we will focus on T-coils, which have been used for many years for wideband amplifier designs with distributed amplification, peaking\footnote{In this paper, peaking refers to the technique to improve bandwidth by using passive elements (such as a small inductor or a T-coil).  See \cite[Section 12.3]{Lee} for more details.} networks, termination networks, and transmission line equalization \cite{Ginzton,HP,Lee,Staric,Paramesh,Roy,Feucht,Addis,Battjes,Hollister,Selmi,True,Ross94,Galal,Pillai,Pillai-patent}. In recent years, high-speed IC designs have added T-coils because of three useful properties: 1) constant input resistance (constant-R), 2) usually over double the bandwidth extension, and 3) a second order transfer function.

Tektronix pioneered the use of T-coils for bandwidth enhancement in 1960s. Bridged T-coils give a significant advantage in bandwidth over conventional RC design (more precisely $2\sqrt{2}$ times improvement). They were widely adopted in Tektronix oscilloscopes, but design details were guarded as a trade secret \cite{Razavi,Lee,RAKO}. It was not until 1990 that former Tektronix engineer Feucht provided the T-coil design equations \cite{Feucht}, but no derivation was given. Lee published a tedious derivation for the standard symmetric T-coil \cite[Chap. 12]{Lee}.

In this paper, a general method is presented to design constant impedance bridged-T networks in terms of symbolic elements.  General formulas for the element relationships are derived using Wang Algebra. The constant impedance constraint is used by adding an element to balance a bridge circuit.  This simplifies calculating the transfer function, and it is presented in a reduced form in terms of a Th\'evenin equivalent circuit. When applied to a constant input resistance (constant-R) network, such as a bridged T-coil that connects to a capacitor load, extended designs with optional resistors in series with inductors and resistors in parallel with capacitors exist for both symmetrical and asymmetrical configurations. The process is to substitute known element values in the symbolic formulas and to solve for the remaining elements. All transfer functions are second order, and it is convenient to parameterize the formula in terms of a bridging capacitor.
Compared with an RC circuit, the bandwidth is usually more than doubled with selected peaking. Previously published and new configurations can be derived from these formulas.

\subsection{Organization}

The rest of this paper is organized as follows. In Section~\ref{sect:2}, we give an introduction to Wang's algebra, as well as a one-page proof based on group theory. Section~\ref{sect:3} is a survey of the applications of Wang's algebra to electrical engineering and graph theory. Section~\ref{sect:4} is devoted to the design of T-coils using Wang algebra. Conclusions are given in Section~\ref{sect:conclusion}.

\section{Wang Algebra}\label{sect:2}

In essence, Wang algebra gives a method to calculate the determinant of a symmetric matrix, which can be more convenient sometimes. A proof was already outlined in the original article \cite{Wang}. Duffin's proof \cite{Duffin} was based on Grassmann algebra, which is 10-pages long. Chow's proof \cite{Chow} based on matrix theory is also quite tricky. In the following, we will present a short proof using group theory, for completeness. Familiarity with group theory is assumed, in particular symmetric group $S_n$ of order $n$ \cite{HuGZ}. Readers uninterested in the proof may simply skip it.

\begin{theorem}[Wang Algebra]\label{theorem:Wang}
Let $\mathbf{A}=[a_{ij}]_{n\times n}$ be a symmetric matrix, i.e., $a_{ij}=a_{ji}$, where $1\leq i,j \leq n$. Write the diagonal elements of $\mathbf{A}$ as
\[
    a_{ii} = a'_{ii} - \sum_{j\neq i} a_{ij}.
\]
Then the determinant $\det(\mathbf{A})$ can be computed as
\begin{equation}\label{eq:wang-det}
     \det(\mathbf{A}) = \prod_{i=1}^n \left(a'_{ii} - \sum_{j\neq i} a_{ij} \right)
\end{equation}
in Wang algebra $\mathbb{W}$.
\end{theorem}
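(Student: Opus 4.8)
The plan is to expand $\det(\mathbf{A})$ by the Leibniz formula and to show that, after reduction in $\mathbb{W}$, every permutation other than the identity contributes nothing, so that only the product of the diagonal entries survives. First I would write
\[
\det(\mathbf{A}) = \sum_{\sigma \in S_n} \operatorname{sgn}(\sigma) \prod_{i=1}^n a_{i,\sigma(i)},
\]
and invoke the rule $x+x=0$, which forces the coefficients to have characteristic $2$; hence $\operatorname{sgn}(\sigma)=1$ for every $\sigma$ and all signs disappear. Since the factor $a'_{ii}-\sum_{j\neq i}a_{ij}$ is by definition exactly the diagonal entry $a_{ii}$, the identity permutation already accounts for the claimed right-hand side $\prod_i a_{ii}$, and it remains only to prove that $\sum_{\sigma\neq\mathrm{id}}\prod_i a_{i,\sigma(i)} = 0$ in $\mathbb{W}$.

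The key device is the involution $\sigma\mapsto\sigma^{-1}$ on $S_n$. Using the symmetry $a_{ij}=a_{ji}$ together with commutativity of $\mathbb{W}$, I would reindex the product to obtain $\prod_i a_{i,\sigma(i)} = \prod_i a_{i,\sigma^{-1}(i)}$, so a permutation and its inverse yield the same monomial. I then split the non-identity permutations into two classes. If $\sigma$ is not self-inverse, then $\sigma$ and $\sigma^{-1}$ are distinct but contribute identical monomials, whose sum vanishes by $x+x=0$; pairing every such $\sigma$ in this way annihilates the whole class. If $\sigma\neq\mathrm{id}$ is self-inverse, it must contain a transposition $(k\,\ell)$, whose two entries produce the factor $a_{k\ell}a_{\ell k}=a_{k\ell}^2=0$ by the rule $x^2=0$; hence each such term vanishes on its own.

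The main obstacle, and the place where both Wang rules and the hypothesis $a_{ij}=a_{ji}$ are genuinely needed, is precisely the treatment of the fixed points of this involution, i.e.\ the self-inverse permutations: the pairing argument cannot cancel a term against itself, so these must be killed directly by $x^2=0$, which works only because symmetry collapses a transposition into a square. I would also stress that symmetry of $\mathbf{A}$ is essential, for without it the monomials of $\sigma$ and $\sigma^{-1}$ need not agree and the identity fails. Combining the two classes gives $\sum_{\sigma\neq\mathrm{id}}\prod_i a_{i,\sigma(i)}=0$, leaving $\det(\mathbf{A}) = \prod_i a_{ii} = \prod_i\left(a'_{ii}-\sum_{j\neq i}a_{ij}\right)$, as claimed; the rewriting of the diagonal as $a'_{ii}-\sum_{j\neq i}a_{ij}$ is only the substitution that makes the formula useful in network applications and plays no role in the cancellation itself.
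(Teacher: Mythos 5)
Your pairing argument $\sigma\leftrightarrow\sigma^{-1}$, with the self-inverse permutations killed by $a_{k\ell}a_{\ell k}=a_{k\ell}^2=0$, is exactly the paper's Lemma~\ref{Lem:1}, and it is correct as far as it goes: it shows that every non-identity term of the Leibniz expansion vanishes in the quotient algebra $\mathbb{W}$, hence $\det(\mathbf{A})=\prod_i a_{ii}$ there. But this is only half of the paper's proof, and the half you omit is precisely the one you explicitly dismiss when you assert that the rewriting $a_{ii}=a'_{ii}-\sum_{j\neq i}a_{ij}$ ``plays no role in the cancellation itself.'' The theorem is a computational claim: one expands $\prod_i(a'_{ii}-\sum_{j\neq i}a_{ij})$ symbolically in the generators $a'_{ii},a_{ij}$, deletes squares and even coefficients, and only then substitutes numerical values to recover the true determinant. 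Equality in the quotient $\mathbb{W}$ does not by itself justify that last step: in $\mathbb{W}$ one also has $3x=x$ and $-x=x$, so a monomial whose true integer coefficient in $\det(\mathbf{A})$ is $3$, $-1$, or $0$ could still appear with coefficient $1$ in the reduced product, and numerical substitution would then give the wrong answer.

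What is missing is the content of the paper's Lemma~\ref{lem:2}: an exact, integer-level cancellation showing that every Leibniz term involving an off-diagonal entry $a_{i\sigma(i)}$ (arising from a cycle $(i_1,\dots,i_l)$ with $l>1$) is cancelled, with sign $(-1)^{l-1}$ against $(-1)^l$, by the cross-terms that the decomposition $a_{ii}=a'_{ii}-\sum_{j\neq i}a_{ij}$ injects into the products over permutations fixing that cycle. This is exactly where the special structure of the diagonal is used (and, as the paper remarks, it does not even require symmetry). Combined with your Lemma-1-type bookkeeping of which cross-terms carry squares or even multiplicity, it is what guarantees that the surviving monomials of the honest determinant all have coefficient exactly $\pm1$ and coincide with the Wang-reduced product, so that numerical substitution is legitimate (this is also the substance of the paper's Corollary on network determinants). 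In short: you have correctly proved the identity in the quotient algebra, but there is a genuine gap between that identity and the theorem as stated and used, and closing it requires the second cancellation argument you left out.
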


It should be pointed out that the product \eqref{eq:wang-det} must be computed symbolically,  for it is certainly not true that $\det(\mathbf{A}) = \prod_{i=1}^n a_{ii}$ numerically. In other words, we take $a'_{ii}, a_{ij} \ (j\neq i)$ as symbols $\in \mathbb{W}$ in the computation of \eqref{eq:wang-det}. The key point here is that many terms disappear thanks to the rule of Wang algebra \eqref{eq:Wang-rule}. Once the product \eqref{eq:wang-det} has been computed, numerical values $a_{ij} \in \mathbb{C}$ can be substituted in to find the determinant. Readers are referred to Section~\ref{sect:3} for examples.

\begin{proof}
We will  prove Theorem \ref{theorem:Wang} in two steps. We use a standard formula of the matrix determinant:
\begin{equation}\label{Eq:Leibniz}
    \det(\mathbf{A}) = \sum_{\sigma\in S_n} \mathrm{sgn}(\sigma) \prod_{i=1}^n a_{i\sigma(i)}
\end{equation}
where $\sigma$ is a permutation in the symmetric group $S_n$, and $\mathrm{sgn}(\sigma)=\pm 1$ denotes the signature of $\sigma$.

\begin{lem}\label{Lem:1}
Let $\mathbf{A}=[a_{ij}]_{n\times n}$ be a symmetric matrix. Then, except the term corresponding to the main diagonal, all terms of Eq. (\ref{Eq:Leibniz}) contain either a square or a factor $2$.
\end{lem}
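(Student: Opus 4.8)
The plan is to work directly with the Leibniz expansion \eqref{Eq:Leibniz} and classify each permutation $\sigma \in S_n$ by its disjoint-cycle structure, showing that every $\sigma$ other than the identity forces the associated monomial $\prod_{i=1}^n a_{i\sigma(i)}$ either to contain a repeated symbol or to be identical to a companion term. The identity permutation contributes exactly the main diagonal term $\prod_{i=1}^n a_{ii}$, which we set aside as the exception.

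First I would dispose of the case where $\sigma$ is an involution, i.e. $\sigma^2 = \mathrm{id}$. Such a $\sigma$ consists only of fixed points and transpositions; if $\sigma \neq \mathrm{id}$ it must contain at least one $2$-cycle $(i\ j)$. The two factors contributed by this cycle are $a_{i\sigma(i)} a_{j\sigma(j)} = a_{ij} a_{ji}$, which equals $a_{ij}^2$ by the symmetry $a_{ij} = a_{ji}$. Hence the corresponding term contains a square.

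The heart of the argument is to pair the remaining permutations via the map $\sigma \mapsto \sigma^{-1}$. When $\sigma$ is not an involution we have $\sigma \neq \sigma^{-1}$, so this map partitions the non-involution permutations into disjoint pairs. The key computation is that the two monomials coincide: reindexing by $i = \sigma(k)$ gives
$$\prod_{i=1}^n a_{i\sigma^{-1}(i)} = \prod_{k=1}^n a_{\sigma(k)\,k} = \prod_{k=1}^n a_{k\,\sigma(k)},$$
where the last step again uses the symmetry of $\mathbf{A}$. Since $\mathrm{sgn}(\sigma^{-1}) = \mathrm{sgn}(\sigma)$, the terms indexed by $\sigma$ and $\sigma^{-1}$ agree in both sign and monomial, so together they contribute $2\,\mathrm{sgn}(\sigma)\prod_{i=1}^n a_{i\sigma(i)}$, i.e. a factor $2$.

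I expect the main obstacle to be bookkeeping rather than depth: one must verify that the dichotomy is exhaustive (every non-identity $\sigma$ is either a non-identity involution or a non-involution) and that the pairing $\sigma \leftrightarrow \sigma^{-1}$ is genuinely fixed-point-free on the non-involutions, so that no term is double-counted or left unpaired. Establishing the monomial identity above cleanly---checking that the reindexing $i = \sigma(k)$ is a bijection and that symmetry is applied factor by factor---is the one step warranting care; the cycle-structure observations and the invariance $\mathrm{sgn}(\sigma^{-1}) = \mathrm{sgn}(\sigma)$ are routine.
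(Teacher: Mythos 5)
Your proposal is correct and follows essentially the same route as the paper's proof: pairing each non-involution $\sigma$ with $\sigma^{-1}$ to extract the factor $2$ via the symmetry $a_{i\sigma(i)}=a_{\sigma(i)i}$ and $\mathrm{sgn}(\sigma)=\mathrm{sgn}(\sigma^{-1})$, and observing that the fixed points of this pairing are involutions, which are either the identity (the main-diagonal term) or contain a transposition forcing a square $a_{ij}a_{ji}=a_{ij}^2$. The only difference is presentational — you dispose of the involutions first and then pair the rest, whereas the paper states the pairing first and then analyzes when it degenerates.
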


\begin{proof} Firstly, the factor $2$ is due to the symmetry of the matrix: associated with any term $\prod_{i=1}^n a_{i\sigma(i)}$ in Eq. (\ref{Eq:Leibniz}), there is another one
\begin{equation}\label{Eq:term2}
    \prod_{i=1}^n a_{\sigma(i)i} = \prod_{j=1}^n a_{j\sigma^{-1}(j)}.
\end{equation}
Since $a_{i\sigma(i)}=a_{\sigma(i)i}$, the two terms are equal. Moreover, $\mathrm{sgn}(\sigma)=\mathrm{sgn}(\sigma^{-1})$. Thus we obtain the factor $2$ if the two terms are distinct.

The argument above fails if and only if the two terms are in fact the same (\textit{e.g.}, the term $a_{11}a_{22}\cdots a_{nn}$ corresponding to the main diagonal). This happens if and only if
\begin{equation}\label{Eq:perm}
    \sigma = \sigma^{-1},
\end{equation}
\textit{i.e.}, $\sigma^2=1$. This implies $\sigma$ has order $1$ or $2$.

The case of order $1$ corresponds to the main diagonal.

The case of order $2$ consists of one or more disjoint cycles of length $2$, \textit{i.e.}, transpositions, which looks like $\sigma=(i,j)(k,l)$ {etc.} In other words, such a product contains square(s) $a_{ij}a_{ji} = a_{ij}^2$, $a_{kl} a_{lk}=a_{kl}^2$ {etc}.
\end{proof}

\begin{lem}\label{lem:2}
Let the diagonal elements of $\mathbf{A}$ be given by
\begin{equation}\label{eq:structure}
    a_{ii} = a'_{ii} - \sum_{j\neq i} a_{ij}.
\end{equation}
Then any terms of (\ref{Eq:Leibniz}) involving an off-diagonal element are cancelled out.
\end{lem}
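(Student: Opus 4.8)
The plan is to obtain Lemma~\ref{lem:2} as the Wang-algebra incarnation of the purely combinatorial Lemma~\ref{Lem:1}. Reading ``a term of \eqref{Eq:Leibniz} involving an off-diagonal element'' as a summand $\mathrm{sgn}(\sigma)\prod_i a_{i\sigma(i)}$ for which $\sigma$ moves at least one index (so that some factor $a_{i\sigma(i)}$ with $\sigma(i)\neq i$ genuinely occurs), these are exactly the terms excluded from the main diagonal in Lemma~\ref{Lem:1}. First I would recall that lemma's dichotomy: such a $\sigma$ either satisfies $\sigma=\sigma^{-1}$, whence it is a product of transpositions and the term carries a square $a_{ij}^2$, or else $\sigma\neq\sigma^{-1}$, whence the term is matched by the equal summand coming from $\sigma^{-1}$ and so carries a factor $2$. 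The two relations of $\mathbb{W}$ in \eqref{eq:Wang-rule} are tailored to precisely these two situations: $x^2=0$ annihilates the square terms and $x+x=0$, i.e. $2x=0$, annihilates the paired terms.

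Next I would bring in the hypothesis $a_{ii}=a'_{ii}-\sum_{j\neq i}a_{ij}$ and check that this substitution does not disturb the cancellation. The point is that the annihilation happens block by block, before any expansion of the composite diagonal. A square term is a single product already containing the factor $a_{ij}^2=0$, so multiplying it by the composite diagonal factors $\prod_{i\in F}a_{ii}$ attached to the fixed points $F$ still yields $0$. A factor-$2$ term is the sum of the two products attached to $\sigma$ and $\sigma^{-1}$; since $\sigma$ and $\sigma^{-1}$ have the identical fixed-point set they carry the identical composite factors $\prod_{i\in F}a_{ii}$, while their off-diagonal parts coincide by the symmetry $a_{ij}=a_{ji}$ and their signs agree. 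Hence this pair sums to $2M=M+M=0$ in $\mathbb{W}$, whatever the common value $M$ is. I would stress that because each offending term (or conjugate pair) vanishes on its own, re-expanding the composite diagonals---which reintroduces off-diagonal symbols through the $-\sum_{j\neq i}a_{ij}$ pieces---cannot resurrect any of them.

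With every off-diagonal-involving term gone, the sole survivor of \eqref{Eq:Leibniz} is the identity permutation, contributing the main diagonal product $\prod_{i=1}^n a_{ii}$; rewriting each diagonal entry through \eqref{eq:structure} turns this into $\prod_{i=1}^n\bigl(a'_{ii}-\sum_{j\neq i}a_{ij}\bigr)$, which is exactly \eqref{eq:wang-det} and finishes the theorem. I expect the only delicate point to be the bookkeeping of the previous paragraph: the vanishing must be argued at the level of whole permutation terms (or $\{\sigma,\sigma^{-1}\}$ pairs), not at the level of individual monomials, since after expanding the composite diagonals a monomial produced by a non-identity $\sigma$ can coincide with one produced by the identity, and only the block-wise viewpoint makes transparent that such coincidences are harmless.
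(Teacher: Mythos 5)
Your argument does not establish Lemma~\ref{lem:2} as the paper intends it; it proves a different and strictly weaker statement. The telltale sign is that your cancellation never uses the hypothesis of the lemma: the decomposition $a_{ii}=a'_{ii}-\sum_{j\neq i}a_{ij}$ enters only at the very end, as a cosmetic rewriting of the surviving diagonal product. What you actually show is that, \emph{for a symmetric matrix}, every non-identity term of \eqref{Eq:Leibniz} is annihilated by the relations \eqref{eq:Wang-rule} --- but that is just Lemma~\ref{Lem:1} restated inside $\mathbb{W}$. Lemma~\ref{lem:2} is a different animal: as the paper's own remark notes, it does not assume symmetry, and its proof uses neither $x^2=0$ nor $x+x=0$. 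The mechanism is an exact, integer-coefficient cancellation driven by the structure \eqref{eq:structure}: a term of \eqref{Eq:Leibniz} whose permutation contains a nontrivial cycle $(i_1,\dots,i_l)$, as in \eqref{Eq:Cycle-aij}, is paired with the term \eqref{Eq:NoCycle-aij} in which that cycle is erased from $\sigma$ and each factor $a_{i_m\sigma(i_m)}$ is instead harvested from the summand $-a_{i_m\sigma(i_m)}$ hidden inside the composite diagonal entry $a_{i_m i_m}$. The two contributions carry signs $\mathrm{sgn}(\sigma')(-1)^{l-1}$ and $\mathrm{sgn}(\sigma')(-1)^{l}$ and therefore cancel identically over $\mathbb{Z}$.

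The distinction is not pedantic. Working entirely inside the quotient algebra $\mathbb{W}$, as you do, establishes \eqref{eq:wang-det} only as a congruence, and evaluation at numerical impedances is not well defined on $\mathbb{W}$: a representative such as $2abd$ is zero in $\mathbb{W}$ but not as a number. What makes Wang's method usable in practice --- reduce the symbolic product first, \emph{then} substitute numbers, as in the paper's worked examples and in the Corollary asserting that every surviving term has coefficient $+1$ --- is precisely the ordinary-algebra cancellation supplied by Lemma~\ref{lem:2}: the monomials discarded by the rules \eqref{eq:Wang-rule} have zero net contribution to $\det(\mathbf{A})$ over $\mathbb{Z}$, and Lemma~\ref{Lem:1} then identifies those discardable monomials as exactly the squares and the doubled terms. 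Your block-by-block bookkeeping in the second paragraph is sound as far as it goes, and it does yield the theorem's identity in $\mathbb{W}$ for symmetric matrices, but to prove the stated lemma you would need to replace the appeal to $x^2=0$ and $x+x=0$ by the sign-reversing pairing between \eqref{Eq:Sum-aij} and \eqref{Eq:Sum-aii} described above.
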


\begin{rem}
Lemma~\ref{lem:2} is slightly more general than Wang algebra, since it does not assume that $\mathbf{A}$ is symmetric.
\end{rem}

\begin{proof}
Without loss of generality, consider an off-diagonal element $a_{ij}$ where $i\neq j$.
The sum of all terms involving $a_{ij}$ is given by
\begin{equation}\label{Eq:Sum-aij}
    \sum_{\sigma: \sigma(i)=j} \mathrm{sgn}(\sigma) a_{ij} \prod_{k=1,k\neq i}^n a_{k\sigma(k)}.
\end{equation}
We claim that all these terms will be cancelled out by those in the following sum
\begin{equation}\label{Eq:Sum-aii}
    \sum_{\sigma': \sigma'(i)=i} \mathrm{sgn}(\sigma') a_{ii} \prod_{k=1,k\neq i}^n a_{k\sigma'(k)}.
\end{equation}

Since $\sigma(i)=j$, indexes $i$, $j$ must belong to a certain cycle $(i_1,i_2,\ldots,i_l)$ of length $l > 1$. Let us extract from (\ref{Eq:Sum-aij}) those containing this cycle:
\begin{align}
   & \sum_{\sigma: (i_1,i_2,\ldots,i_l)\in \sigma} \mathrm{sgn}(\sigma)  \prod_{k \in \{i_1,i_2,\ldots,i_l\}} a_{k\sigma(k)} \prod_{k \notin\{i_1,i_2,\ldots,i_l\}} a_{k\sigma(k)}.   \label{Eq:Cycle-aij}
\end{align}
There exist corresponding terms in (\ref{Eq:Sum-aii}) with $\sigma'$ fixing indexes $i_1,i_2,\ldots,i_l$:
\begin{align}
   & \sum_{\sigma': \sigma'(i_1)=i_1, \cdots, \sigma'(i_l)=i_l} \mathrm{sgn}(\sigma')  \prod_{k \in \{i_1,i_2,\ldots,i_l\}} a_{kk} \prod_{k \notin\{i_1,i_2,\ldots,i_l\}} a_{k\sigma'(k)}. \label{Eq:NoCycle-aij}
\end{align}
Clearly, $\sigma$ is a composition of $\sigma'$ and $(i_1,i_2,\ldots,i_l)$.

Note that a negative counterpart of $a_{k\sigma(k)}$ exists in $a_{kk} = a'_{kk} - \sum_{m\neq k} a_{km}$, so the sign of those terms in (\ref{Eq:NoCycle-aij}) is
\[
\mathrm{sgn}(\sigma')(-1)^l.
\]
But the sign of those terms in is (\ref{Eq:Cycle-aij})
\[
\mathrm{sgn}(\sigma)=\mathrm{sgn}(\sigma')(-1)^{l-1},
\]
they must have different signs, therefore being cancelled out completely.
\end{proof}

The proof of Theorem \ref{theorem:Wang} is completed by combining the two lemmas.
\end{proof}

\begin{rem}
In \cite{Wang}, Ki-Tung Wang treated terms containing a square and terms containing factor 2 separately, which correspond to 2-cycles $(i_1,i_2)$ and longer cycles $(i_1,i_2,\ldots,i_l)$ in the above proof, respectively.
\end{rem}

An astute reader may wonder if there is any gain to use Wang algebra to compute the matrix determinant. After all, applying the formula (\ref{Eq:Leibniz}) crudely requires $n\cdot n!$ multiplications, while  computing (\ref{eq:wang-det}) may require $n^n$ multiplications in the worst case. By Stirling's formula $n!\approx (n/e)^n$, the latter is asymptotically much greater. Nevertheless, there are some advantages of Wang algebra:
\begin{itemize}
    \item The number of terms is greatly reduced by applying Wang's rule (\ref{eq:Wang-rule}), thus it can be more convenient for small dimensions $n$;
    \item Even if there are faster numerical algorithms to compute matrix determinant\footnote{For example, the elimination method (aka Gauss elimination) can be used to compute matrix determinant with $O(n^3)$ complexity. The elimination method was invented by ancient Chinese mathematicians to solve systems of linear equations in \textit{The Nine Chapters on the Mathematical Art} (see \cite{Nine-Chapters} for an English translation), predating the work of Gauss by about 2000 years. \textit{The Nine Chapters on the Mathematical Art} is a textbook taught in the National University during the Han Dynasty of China (202 BCE -- 220 CE). Ki-Tung Wang was aware of the elimination method, since classical Chinese mathematics was taught at the Imperial Tungwen College \cite{Guo03,Guo15}.}, the symbolic formulas given by Wang algebra can lead to considerable insights.
\end{itemize}
We will demonstrate these advantages in the following sections.

%

\section{Applications}\label{sect:3}

\subsection{Applications to Electrical Networks/Circuits}

\begin{figure}
  \centering
  \includegraphics[width=7cm]{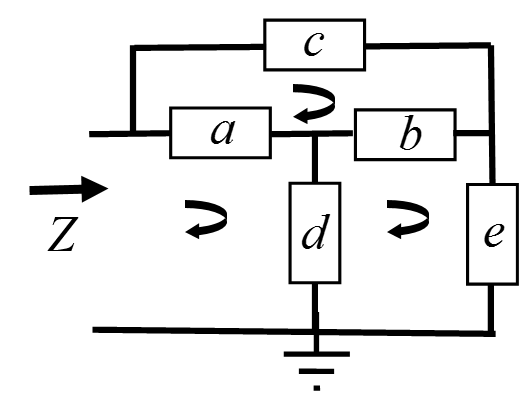}\\
  \caption{Bridged-T network used to derive the constant-Z constraint by loop equations and Wang algebra.}\label{fig:Bridged-T}
\end{figure}

For simplicity, let us assume planar networks, i.e., networks which can be drawn on a plane such that no branches cross each other. An example of planar electrical network is shown in Fig.~\ref{fig:Bridged-T}, with impedances $a, b, c, d, e$. In fact, this is a model of the bridged T-coil which will be analyzed in detail in the following section. The problem is to determine the joint impedance $Z$. Applying Kirchhoff's voltage law to three loops shown, we obtain the following system of linear equations:
\begin{equation}\label{eq:example-3by3}
    \begin{pmatrix}
    a + d & -a & -d \\
    -a  & a + b+c & -b \\
    -d  & -b & b+ d+e
    \end{pmatrix}
    \begin{pmatrix}
    I  \\
    I_2   \\
    I_3
    \end{pmatrix}
    =
    \begin{pmatrix}
    V  \\
    0   \\
    0
    \end{pmatrix}.
\end{equation}
Note that the above matrix is always symmetric and exhibits the structure given in \eqref{eq:structure}, by selecting the orientations of the loops properly. Denote by $\mathbf{M}_1$ the $3\times 3$ matrix on the left-hand side of~(\ref{eq:example-3by3}), and by $\mathbf{M}$ the $2\times 2$ submatrix on its bottom right. Cramer's rule is a standard approach to computing the joint impedance $Z$:
\begin{equation}
    \frac{1}{Z} = \frac{I}{V} = \frac{\det(\mathbf{M})}{\det(\mathbf{M}_1)}.
\end{equation}
To proceed, we firstly compute the numerator:
\begin{align}
    \det{(\mathbf{M})}&= \det{\begin{pmatrix}
     a + b+c & -b \\
     -b & b+ d+e
    \end{pmatrix}} \nonumber \\
    &=ab+ad+ae+b^2+bc+bd+be+cd+ce \nonumber \\
    &=ab+ad+ae+bc+bd+be+cd+ce \label{eq:cotree}
\end{align}
where $b^2=0$ in Wang algebra $\mathbb{W}$. Then we multiply it with $a+d$ to compute the denominator:
\begin{align*}
    \det{(\mathbf{M}_1)} &= (a+d)(ab+ad+ae+bc+bd+be+cd+ce) \\
    &= 2abd+abc+abe+acd+ace+ade+bcd+bde+cde \\
    &=abc+abe+acd+ace+ade+bcd+bde+cde.
\end{align*}
where $2abd=0$ in Wang algebra $\mathbb{W}$. Many product terms are 0 and do not have to be listed. Note the saving it brings in this case: 18 initial terms in the denominator are reduced to 8 in the end.

For the reason to be clear in the next subsection, the determinant $\det{(\mathbf{M})}$ is called the \textit{mesh determinant} of the electrical network.

We can see that another statement of Wang algebra for computing network determinants is:

\begin{corol}[Wang's Rule]
The determinant of a planar network does not contain any terms containing a square or a factor $2$. Moreover, all its terms have coefficient $+1$.
\end{corol}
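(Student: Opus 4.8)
The plan is to reduce the claim to a combinatorial statement about spanning trees via the Cauchy--Binet formula, using planarity at exactly one point, where an integrality (unimodularity) property is needed. I would emphasize first that Wang's theorem (Theorem~\ref{theorem:Wang}) does not by itself suffice: it is an identity in $\mathbb{W}$, where coefficients already live modulo $2$ and squares are already killed, so it only tells us that the \emph{true} integer determinant is squarefree and odd \emph{modulo} the Wang relations. It does not, on its own, rule out a stray $2abc$ or a $-abc$ in the honest integer polynomial. The content of the corollary is precisely that, for a planar network, no such term survives, so the genuine work lies beyond Theorem~\ref{theorem:Wang}.

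First I would write the loop-impedance matrix as a Gram-type product. Let $G$ be the connected planar graph underlying the network, collect the branch impedances in the diagonal matrix $\mathbf{Z}=\mathrm{diag}(z_e)$, and let $\mathbf{C}$ be the mesh--branch matrix whose rows index the bounded faces (meshes) and whose entries lie in $\{0,\pm1\}$ according to how each branch bounds each mesh. With the loop orientations chosen as in~(\ref{eq:example-3by3}), the relevant matrix (such as $\mathbf{M}$ or $\mathbf{M}_1$) factors as $\mathbf{M}=\mathbf{C}\mathbf{Z}\mathbf{C}^{\top}$, and the Cauchy--Binet formula gives
\begin{equation*}
  \det(\mathbf{M})=\sum_{S}\det(\mathbf{C}_{S})^{2}\prod_{e\in S}z_{e},
\end{equation*}
where $S$ ranges over branch subsets of size equal to the number of meshes and $\mathbf{C}_{S}$ is the corresponding column submatrix.

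The decisive step is to control the integers $\det(\mathbf{C}_{S})$, and this is where planarity enters: the mesh--branch matrix of a planar graph is, up to signs, the node--branch incidence matrix of the dual graph $G^{\ast}$, and incidence matrices of directed graphs are totally unimodular. Hence every minor satisfies $\det(\mathbf{C}_{S})\in\{0,\pm1\}$, so $\det(\mathbf{C}_{S})^{2}\in\{0,1\}$. A subset $S$ contributes iff $\det(\mathbf{C}_{S})\neq0$, which happens exactly when $S$ is a mesh basis, i.e. the complement of a spanning tree of $G$ (equivalently a spanning tree of $G^{\ast}$). Each such $S$ then contributes its squarefree monomial $\prod_{e\in S}z_{e}$ with coefficient exactly $+1$, and distinct subsets give distinct monomials, so nothing recombines into a factor~$2$. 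This yields all three assertions simultaneously: no squares (the branches of $S$ are distinct), no factor~$2$ (each minor-square is $0$ or $1$ and monomials do not collide), and every coefficient equal to $+1$.

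The main obstacle I anticipate is the total-unimodularity step, and in particular the sign bookkeeping that identifies $\mathbf{C}$ with the dual incidence matrix; this is the only place planarity is genuinely used, and it is also what would fail for a non-planar network, where a fundamental-cycle matrix need not be unimodular and a minor-square such as $4$ could appear. A fallback that stays closer to the paper is to track signs directly in the Leibniz expansion~(\ref{Eq:Leibniz}): group the surviving permutations by the edge set they select, as in the proof of Lemma~\ref{lem:2}, and show that for a planar mesh matrix each squarefree edge set is selected by a single permutation whose signature works out to $+1$. That route avoids external machinery but reproduces the same unimodularity computation in disguise, which is why I would present the Cauchy--Binet argument as the clean main line.
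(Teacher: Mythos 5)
Your proposal is correct, and it takes a genuinely different --- and in one important respect stronger --- route than the paper. The paper does not really prove this corollary: it presents it as a direct re-reading of Theorem~\ref{theorem:Wang} (whose proof is the Leibniz-expansion/permutation-group argument of Lemmas~\ref{Lem:1} and~\ref{lem:2}), illustrated on the worked $3\times 3$ example, and it defers the identification of the surviving terms with cotree products --- and hence the claim that every coefficient is exactly $+1$ --- to the cited literature (Chen's Theorems 2.28--2.29 and Duffin's ``just discriminants''). You correctly isolate the gap that this leaves: Theorem~\ref{theorem:Wang} is an identity in $\mathbb{W}$, where coefficients are read modulo $2$ and squares are annihilated, so it establishes the ``no squares, no factor $2$'' part only in that quotient and says nothing about whether an integer coefficient is $+1$ rather than $-1$ or $3$. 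Indeed the paper itself concedes, in the paragraph on non-planar networks, that odd coefficients other than $1$ can occur in general, which shows the $+1$ claim is not a formal consequence of Theorem~\ref{theorem:Wang}. Your Cauchy--Binet factorization $\mathbf{M}=\mathbf{C}\mathbf{Z}\mathbf{C}^{\top}$, combined with the identification of the mesh--branch matrix of a planar network with the reduced incidence matrix of the dual graph and the total unimodularity of the latter, supplies exactly the missing integrality: each contributing column set is a cotree, each minor-square is $1$, and distinct cotrees give distinct monomials. What the paper's route buys is elementary self-containedness for the mod-$2$ statement and continuity with the rest of Section~\ref{sect:2}; what your route buys is an honest proof of the coefficient-$+1$ assertion over $\mathbb{Z}$ and a precise localization of where planarity is used (and why the statement degrades for non-planar circuit bases). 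One small caveat: when the source loop is included (the matrix $\mathbf{M}_1$ rather than $\mathbf{M}$), the zero-impedance source branch must be carried along as an extra column, but this only kills the cotrees containing it and does not disturb the argument.
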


Ki-Tung Wang initially considered planar networks \cite{Wang}. In follow-up works by Chinese researchers \cite{Ting,Tsai,Chow}, the restriction of planar networks was removed. Wang algebra $\mathbb{W}$ still holds for non-planar networks, but the rule stated above needs to be modified slightly. More precisely, a term coefficient may be an odd number; therefore, the coefficients are calculated modulo $2$ as in Wang algebra $\mathbb{W}$.



Duffin proved that the so-called \textit{just discriminants}, whose coefficients are all equal to $+1$ and which do not necessarily come from Kirchhoffian networks, can also be evaluated in Wang algebra \cite{Duffin}. For example, he showed that Wang algebra holds for a network lying on the surface of a torus. But Grassmann algebra needs to be used for {non-just discriminants}, in a similar way to Wang algebra for just discriminants.

%

\subsection{Applications to Graph Theory}

Of course, electrical networks are examples of graphs. Since matrix theory and graph theory are closely related, it is not surprising that Wang algebra is also useful in graph theory. Specifically, it gives an algebraic method to enumerate the trees and cotrees of a graph. Readers are referred to \cite{Chen-Graph} for an introduction to graph theory and its engineering applications.

A graph consists of a set of nodes together with a set of edges. The electrical network shown in Fig.~\ref{fig:Bridged-T} is an example of graph. It contains 4 nodes, as well as 5 edges labelled by $a$, $b$, $c$, $d$ and $e$.

A (spanning) \textit{tree} of a graph is a set of edges which connect all nodes and which do not contain any loops. For example, edges $\{a,b,e\}$ form a tree in Fig.~\ref{fig:Bridged-T}. The complement of a tree in a graph in called a \textit{cotree}; in other words, it is a set of edges when removed leave no loops. For example, $\{c,d\}$ form a cotree, since it is the complement of $\{a,b,e\}$. Wang algebra gives a handy method to enumerate trees/cotrees.

\subsubsection{Enumerating Trees/Cotrees}

For convenience, let the admittances be $A=1/a$, $B=1/b$, $C=1/c$, $D=1/d$, $E=1/e$ in Fig.~\ref{fig:Bridged-T}. By Kirchhoff's current law, we have another set of equations for Fig.~\ref{fig:Bridged-T}:
\begin{equation}\label{eq:example-KCL}
    \begin{pmatrix}
    A + C & -A & -C \\
    -A  & A + B+D & -B \\
    -C  & -B & B+ C+E
    \end{pmatrix}
    \begin{pmatrix}
    V  \\
    V_2   \\
    V_3
    \end{pmatrix}
    =
    \begin{pmatrix}
    I  \\
    0   \\
    0
    \end{pmatrix}.
\end{equation}

The above matrix also exhibits the structure given in \eqref{eq:structure}. Denote by $\mathbf{S}$ the $3\times 3$ matrix on the left-hand side of~(\ref{eq:example-KCL}), and by $\mathbf{S}_1$ the $2\times 2$ submatrix on its bottom right. The determinant $\det{(\mathbf{S})}$ is called the \textit{node determinant} of the electrical network.

Applying Cramer's rule yields the joint impedance
\begin{equation}
    {Z} = \frac{V}{I} = \frac{\det(\mathbf{S}_1)}{\det(\mathbf{S})}.
\end{equation}
Again, calculation is greatly simplified by using Wang algebra. The numerator can be calculated as
\begin{align*}
    \det{(\mathbf{S}_1)}&= (A + B+D)(B+ C+E)\\
    &=AB+AC+AE+BC+BE+BD+CD+DE.
\end{align*}
Multiplying it with $A+C$ yields the denominator
\begin{align*}
    \det{(\mathbf{S})} &= (A+C)\det{(\mathbf{S}_1)} \\
    &= ABE+ABD+ACD+ADE \\
    & +ACE+BCE+BCD+CDE.
\end{align*}
The saving is the same as before: 18 initial terms in the denominator are reduced to 8 in the end.

A tree, respectively cotree, product is the product of the edges of the tree, respectively cotree. It is known that the terms of node determinant are tree products \cite[Theorem 2.28]{Chen-Graph}, while the terms of mesh determinant are cotree products \cite[Theorem 2.29]{Chen-Graph}.
In fact, the 8 terms of node determinant  $\det{(\mathbf{S})}$  are precisely the trees, while those of the mesh determinant \eqref{eq:cotree} are the cotrees of the graph shown in Fig.~\ref{fig:Bridged-T}.

The following relation holds between the mesh determinant and node determinant:
\begin{equation}
    \det(\mathbf{M}(a,b,c,d,e))= abcde \cdot \det(\mathbf{S}(A,B,C,D,E)).
\end{equation}
This agrees with the one-to-one correspondence between trees and cotrees.

\subsubsection{Counting Trees/Cotrees}

Obviously, setting $a=b=c=d=e=1$, we obtain the number of trees/cotrees of a graph:
\begin{equation}
    \det(\mathbf{M}(1,1,1,1,1))= \det(\mathbf{S}(1,1,1,1,1)).
\end{equation}
For example,
\begin{equation}\label{eq:Lapalacian}
    \mathbf{S}(1,1,1,1,1) = \begin{pmatrix}
    2 & -1 & -1 \\
    -1  & 3 & -1 \\
    -1  & -1 & 3
    \end{pmatrix}.
\end{equation}
It is easy to check that $\det(\mathbf{S}(1,1,1,1,1))=8$, implying there are 8 trees in the graph, as expected. In fact, $\det(\mathbf{S}(1,1,1,1,1))$ is a cofactor of the Laplacian matrix of the graph, and this is a well-known method in graph theory to calculate the number of trees. This fact can be used to check if we have obtained the correct number of terms in network determinants.

\section{Bridged T-coils}

In this section, we demonstrate how Wang algebra can be used to greatly simplify the design of bridged T-coils. T-coils are an old technology, but featured with modern applications. Constant-R T-coils provide ideal load or termination. They offer $2.73$ improvement for acceptable $0.4\%$ overshoot to ideal step input. Now T-coils are used in high-speed buffer design and electrostatic discharge (ESD) compensation with bandwidth improvement. A review of historical applications at Tektronix is given in Section~\ref{sect:4}.

For peaking applications, T-coils are designed to drive capacitive loads that model input transistor structures (or in a reciprocal manner, current source transistor drivers with output capacitance). T-coil extensions for loads with series or parallel resistor elements have been published \cite{Roy,Feucht,Addis,Hollister,Selmi,True,Ross94}. Also, more detailed models of the physical structure reveal resistive losses in the branches \cite{Galal,Pillai,Pillai-patent}. With the formulas in this section, extensions for losses or resistive elements can be included in the design process. Extended T-coil configurations are shown in Section~\ref{sect:II}.

Several methods have been reported to derive the transfer function to a capacitor load \cite{Ginzton,Lee,Staric,Paramesh,Roy}, and this remains an algebraically tedious process, especially for the extended configurations. Most of these methods first derive fourth order equations. These are reduced to second order equations after identifying common factors. The calculations (not shown) are particularly tedious for the general configurations reported later \cite{Ross94}.

For generality, a symbolic method using Wang algebra for simplicity is used in Section \ref{sect:III} to derive the general constant input impedance (constant-Z) constraint relationship. A symbolic approach (based on an augmented Th\'evenin equivalent circuit) is used along with the constant-Z constraint to produce a transfer function that is already of reduced order (or easy to reduce).

Section~\ref{sect:IV} gives two sets of T-coil formulas for the most general symmetrical and asymmetrical cases where the termination is $R$.  Simpler T-coils can be derived by removing some resistor terms. A design process is suggested based on selecting complex-pole angles.

\subsection{T-coil History at Tektronix and Afterwards}\label{sect:4}

The application of T-coils in products is documented in personal narratives by C. R. Battjes \cite{Battjes} and J. L. Addis \cite{Addis}.  The first author adds T-coil derivations done while at Tektronix and after leaving Tektronix.  Around 1948, William Hewlett had lunch in Oregon with Tektronix president Howard Vollum and a key engineer Logan Belleville.  Hewlett penciled out the distributed amplifier circuit \cite{HP} on a paper napkin.  Tektronix adopted distributed amplifiers or distributed deflection circuits in cathode ray tubes in oscilloscopes.  The input for the Tektronix 519 oscilloscope (1 GHz) drove the distributed vertical deflection plates directly.  Other products included the 517 (50 MHz with distributed amplifier), 545 main frame with plugins (30 MHz), 545A (30 MHz), and 585A (100 MHz with distributed deflection plates).  T-coils for inter-stage peaking were used in Type K and L plugins and in the 3A6 plugin that connected directly to a cathode ray tube deflection structure. This is just a short list of some early implementations.  More information is available by searching the product names. Note, some schematics assume, but do not show the bridging capacitor $C_B$ (see Fig.~\ref{fig:Fig1}).


The portable oscilloscopes, Tektronix 454 and 454A (150 MHz) used distributed deflection elements and an output amplifier with T-coil peaking.  It also used T-coils for input delay line differential phase compensation.  T-coils on individual circuit boards were fabricated for transistor interstate peaking.  The transistor input was still approximated as capacitor.  A leading edge (at that time) oscilloscope the Tektronix 7904 (500 MHz) had minimum VSWR T-coil that compensated for some additional parasitic degradations.  The follow-on 7104 (1 GHz) oscilloscope used faster transistors, thin film conductors on substrates, and a transmission line package design.

The so-called ``Ross T-coil" is noted in \cite{Addis,Battjes} and was implemented by thin film deposition to peak and terminate the inter-chip transmission lines. Other T-coils boosted the frequency response and minimized reflections in a $50\Omega$ input for the 11A72 dual channel plugin preamplifier hybrid.

C. R. Battjes \cite{Battjes} (a Stanford University graduate) joined Tektronix to design bipolar transistor amplifiers. He created and conducted internally the amplifier frequency and time response (AFTR) class for new engineers.  The T-coil derivations were those of \cite{HP} or variations thereof. Rather than using a full bipolar transistor hybrid-pi load input model used in Fig. \ref{fig:Fig2}(d) and Figs. \ref{fig:Fig3}(c,d); Battjes favored the simpler $R_S-C$ load model in Fig. \ref{fig:Fig2}(b) and Fig. \ref{fig:Fig3}(a).  It offered better time-constant design intuition and the $R_P$ contribution was considered large and negligible. Besides, the full design equation mathematics shown in this paper were not known at that time.

Sitting next to Battjes, the first author became interested in working out the mathematics for the $R_S-C$ model.  He applied Wang algebra, but never revealed the derivation details outside of Tektronix.  Various forms of the result were later published as the “Ross T-coil” \cite{Addis,Battjes}.

The Wang algebra approach differed from earlier known derivations.  The coupled inductor was split into three inductors, as shown in Fig. \ref{fig:Fig1}, to avoid any pre-defined coupling assumptions.  A constant resistance constraint was applied initially to calculate some fixed element values (shown on and above the Gain line in Table \ref{table}). The equations were helpful in making later simplifications.  The parameter of interest was selected as $C_B$ because it could be varied from 0 to nearly infinity. $C_B$ was easily related to the inductance $M$, the coupling coefficient $k$, and a second order equation damping variable $\delta$.  This derivation did not require taking an absolute value of $k$, as was done in some other derivations.  Nor was it limited to symmetrical T-coils.  (For example, the T. T. True patent \cite{True} for asymmetrical T-coils was known internally, but never used.)

After leaving Tektronix, the first author investigated deriving the equations for the full bipolar transistor hybrid-pi input model in Fig. \ref{fig:Fig2}(d) and Figs. \ref{fig:Fig3}(c,d), as discussed in this paper.  As technology evolved, digital displays replaced distributed deflection structures.  Also, hybrid integrated circuits and integrated circuits were replacing discrete transistor circuits.  But as the references show, T-coils are still being implemented in integrated circuits and for ESD protection.

\subsection{Constant-R Bridged T-coils}\label{sect:II}

\begin{figure}
  \centering
  \includegraphics[width=\linewidth]{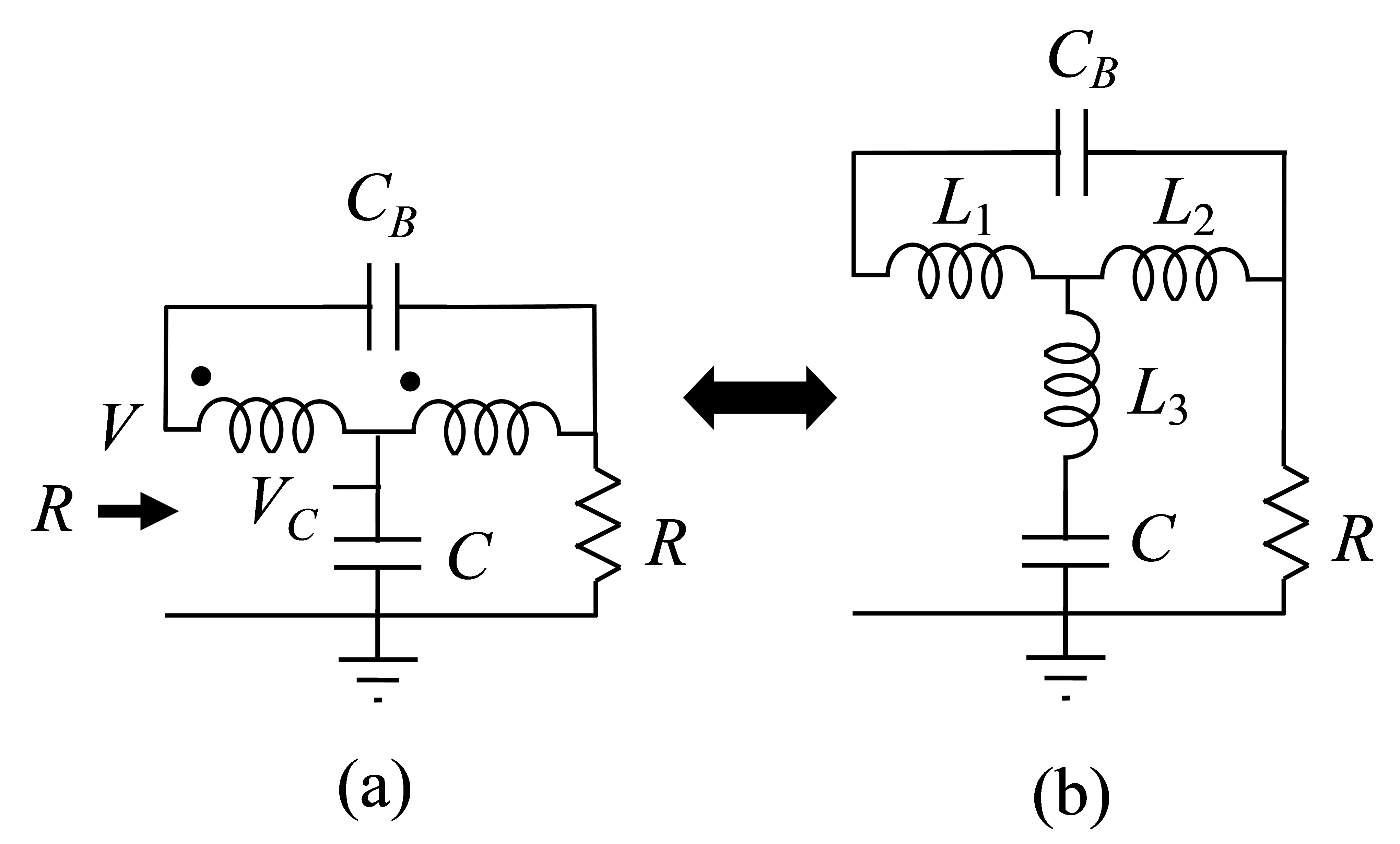}\\
  \caption{Standard constant-R bridged T-coil driving $C$, terminated by $R$, and with a bridging capacitor $C_B$, (a) Coupled transformer model. (b) Equivalent model with three inductors.}\label{fig:Fig1}
\end{figure}

The standard T-coil is shown in Fig. \ref{fig:Fig1}(a). The structure contains a bridging capacitor $C_B$ and is terminated by a resistor $R$. Of interest is the transfer function to a load capacitor $C$. The three-terminal coupled transformer in Fig. 1(a) can be modeled as shown in Fig. \ref{fig:Fig1}(b) by inductors $L_1$, $L_2$, and $L_3$ with well-documented conversions between the two forms. This allows for a wider range of solutions including cases (for some extended configuration values) that make $L_1$ negative. In the standard T-coil, $L_1= L_2$.

\begin{figure}
  \centering
  \includegraphics[width=\linewidth]{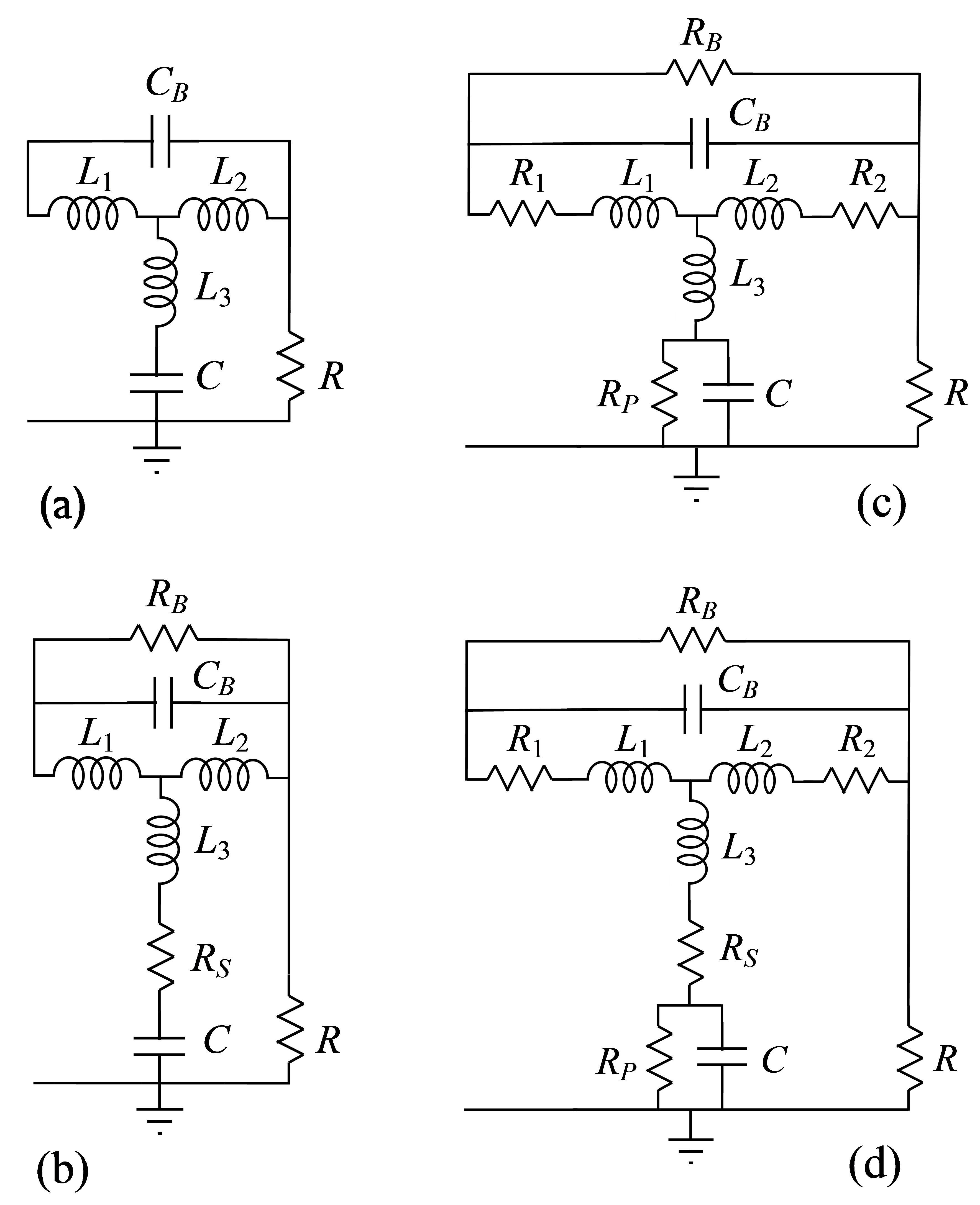}\\
  \caption{Symmetrical T-coils where $L_1 = L_2$ and $R_1 = R_2$ and with calculated $R_B$. (a) Standard configuration (and no $R_B$, $R_1$ and $R_2$). (b) With $R_S$. (c) With $R_P$. (d) With $R_S$ and $R_P$.}\label{fig:Fig2}
\end{figure}

Fig.~\ref{fig:Fig2} shows symmetrical constant-R configurations with Fig.~\ref{fig:Fig2}(a), the standard configuration. Figs.~\ref{fig:Fig2}(b)-\ref{fig:Fig2}(d) adds additional resistor combinations. These extensions add a bridging resistor $R_B$ to allow symmetry with $L_1 = L_2$ and $R_1 = R_2$. Symmetrical structures might be easier to fabricate.

\begin{figure}
  \centering
  \includegraphics[width=\linewidth]{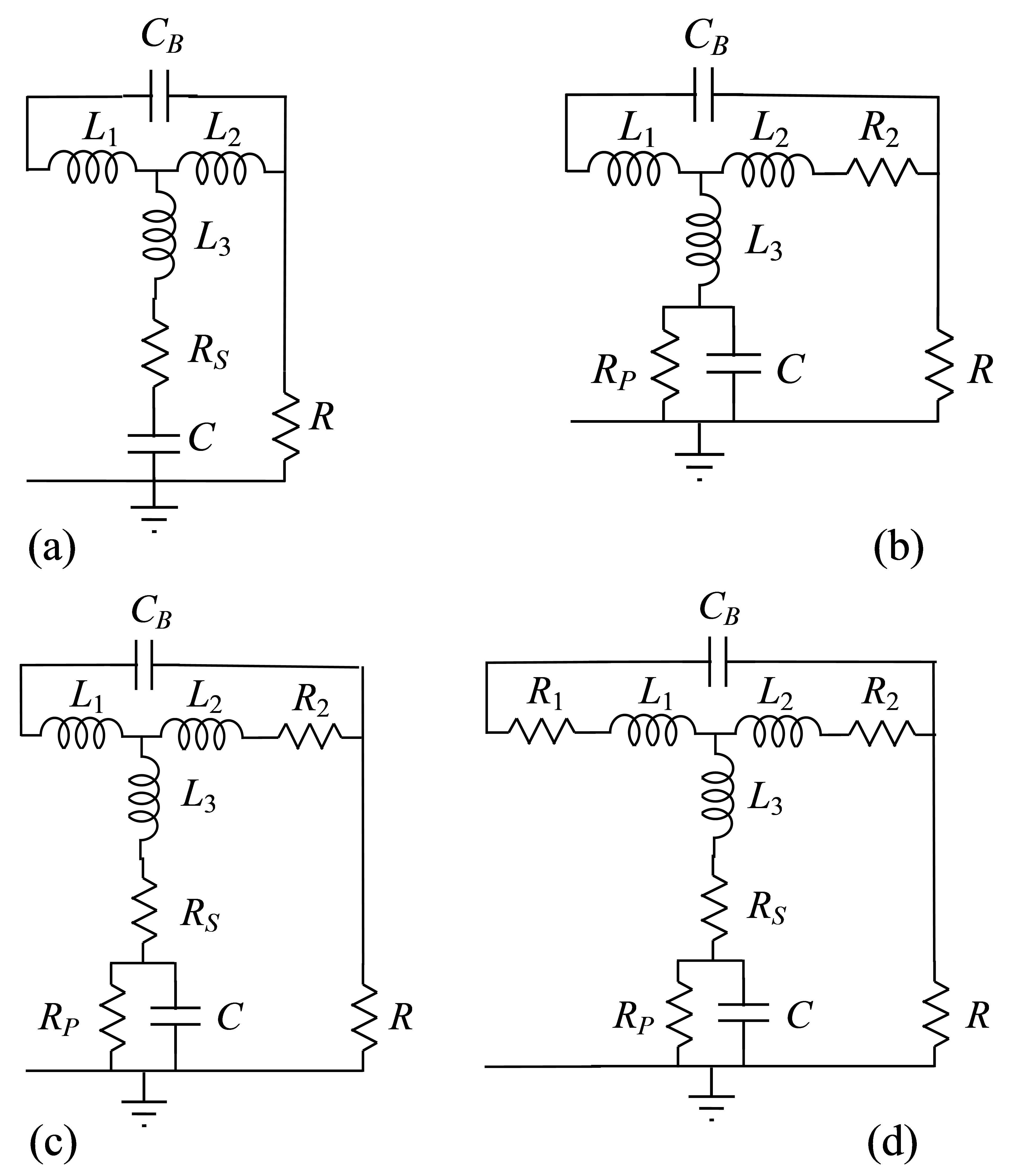}\\
  \caption{Asymmetrical T-coils with added $R_2$ where needed. (a) With $R_S$. (b) With $R_P$. (c) With $R_S$ and $R_P$. (d) Generalized with $R_S$, $R_P$, and selectable $R_1$ and calculated $R_2$ to balance the losses in the $L_1$ and $L_2$ branches.}\label{fig:Fig3}
\end{figure}

Similar to Fig.~\ref{fig:Fig2}, Fig.~\ref{fig:Fig3} shows asymmetrical constant-R T-coils configurations. Fig.~\ref{fig:Fig3}(d) provides a general configuration with a selectable $R_1$ over a limited range as long as $R_2\geq 0$. For example, resistors can be entered proportionally in both inductor branches for physical losses.

The first author derived the formulas in the late 1960s for the series configuration in Fig.~\ref{fig:Fig3}(a) to include bipolar transistor base resistance. Equivalent formulas have been published in later works \cite{Feucht,Addis,Battjes,Hollister,Selmi}. The constant-R component values for Fig.~\ref{fig:Fig3}(b) were revealed in \cite{True}, without the associated transfer function. Later, the first author published equations for generalized configurations including both Fig.~\ref{fig:Fig2}(d) and Fig.~\ref{fig:Fig3}(d) \cite{Ross94}. The asymmetrical configuration did not need a bridging resistor.

\subsection{Constant-Z Bridged-T Networks}\label{sect:III}

\subsubsection{Driving Point Impedance}

A general bridged-T network driving point impedance in Fig.~\ref{fig:Bridged-T} can be derived in symbolic form from loop or nodal equations. Wang algebra simplifies the process with the aforementioned rules. The resulting driving point admittance is set equal to the terminating admittance ${1}/{e}$:
\begin{equation}\label{eq:1}
\begin{split}
    &\frac{1}{Z}=\frac{1}{e}=\frac{N}{D}=\frac{(a+b+c)(b+d+e)}{(a+d)N} \\
    &=\frac{ab+ad+ae+bc+bd+be+cd+ce}{abc+abe+acd+ace+ade+bcd+bde+cde}.
\end{split}
\end{equation}
In \eqref{eq:1} the driving point admittance numerator $N$ is formed by adding the impedances of a set of independent loops except the input loop, and then by multiplying these sums with Wang algebra rules. The denominator $D$ is formed by multiplying $N$ and the input loop sums and applying Wang algebra rules.

With regular algebra, \eqref{eq:1} is simplified to produce symbolic, constant-Z constraints for asymmetrical \eqref{eq:2} and symmetrical \eqref{eq:3} networks as arranged as
\begin{align}
&\left(a+b\right)\left(d-e^2/c\right)+ab+\left(a-b\right)e-e^2 = 0, \label{eq:2}\\
&\ \ \ a=b,\ \ \ \ 2a\left(d-e^2/c\right)+a^2-e^2=0. \label{eq:3}
\end{align}

The component relationships are formed by equating the factors of each power of the Laplace variable $s$ to zero after inserting the actual impedances into \eqref{eq:2} or \eqref{eq:3}. This process yields a set of independent constraints. In some cases the relationships show that the circuit is not suitable due to a mathematical contradiction or to non-realizable (negative) components.

\begin{figure}
  \centering
  \includegraphics[width=\linewidth]{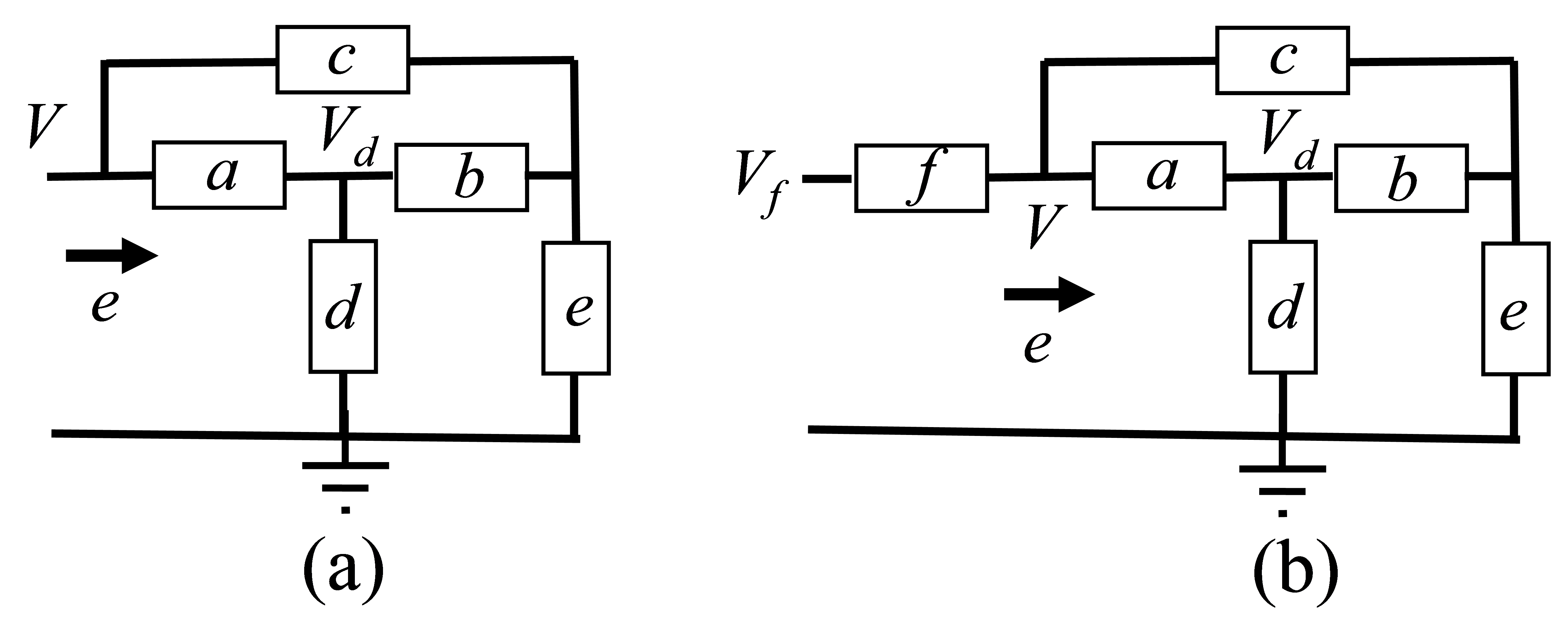}\\
  \caption{Transfer function to $V_d$.  (a) Constant-$Z$ bridged-T network and $V_d$. (b) Equivalent $V_d$ with added  $f$ and $V_f$.}\label{fig:Fig5}
\end{figure}

\begin{figure}
  \centering
  \includegraphics[width=\linewidth]{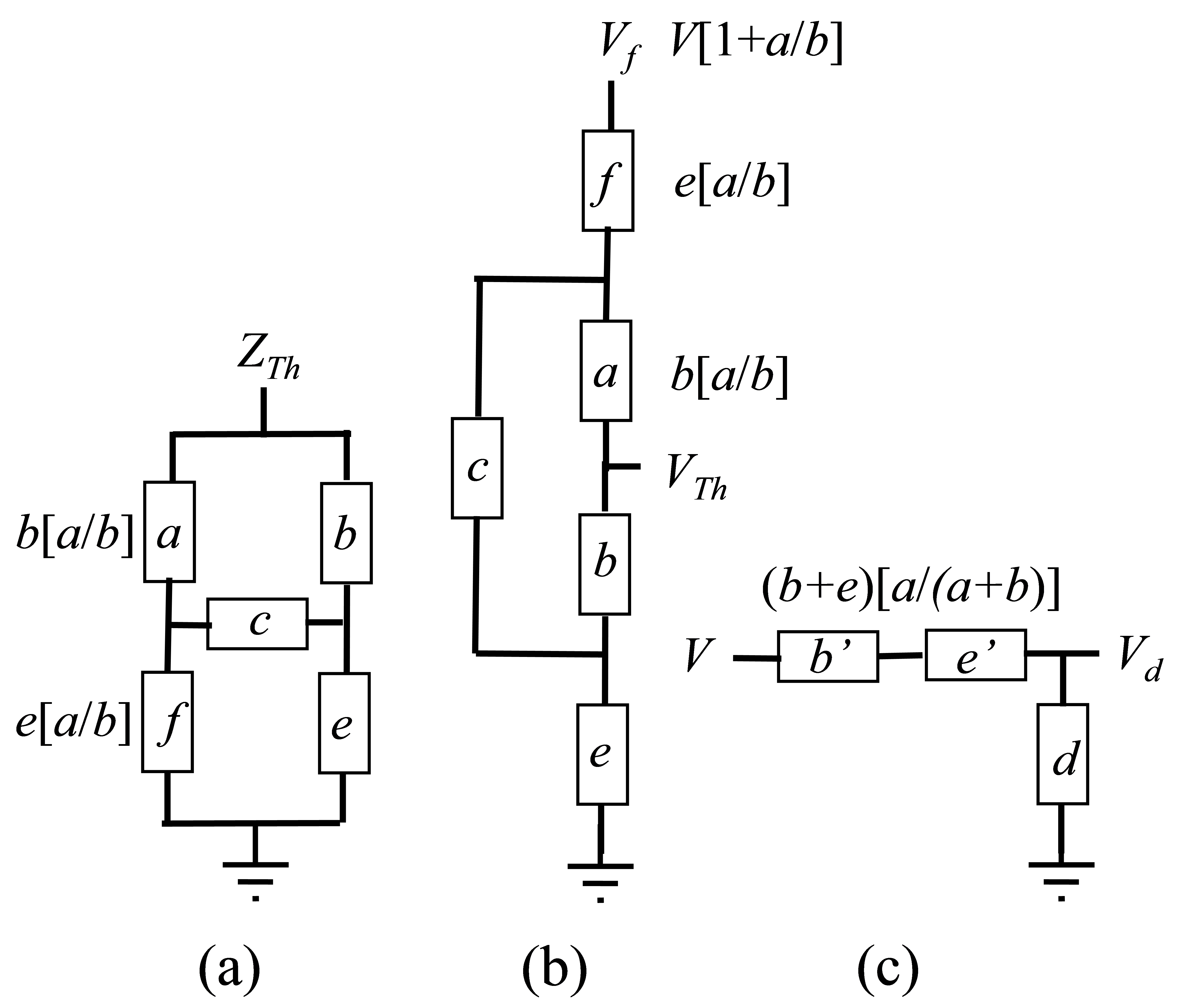}\\
  \caption{Th\'evenin equivalent circuit development to $V_d$. (a) Impedance calculation.  (b) Voltage calculation with scaled-mirror symmetry about $V_{Th}$. (c) Th\'evenin equivalent circuit.}\label{fig:Fig6}
\end{figure}

\subsubsection{Transfer Function}

A method is presented for calculating the transfer function to node $V_d$  in Fig.~\ref{fig:Fig5}(a), if the constant-Z constraint is satisfied. The configuration in Fig.~\ref{fig:Fig5}(b) produces an equivalent result with impedance $f$   and $V_f=V(e+f)/e$. A Th\'evenin equivalent circuit can be derived at node $V_d$  as shown in Fig. 6. Without any loss in generality, the bridge in Fig.~\ref{fig:Fig6}(a) can be balanced by choosing $f=e/(ab)$ to null out the $c$ term.  The Th\'evenin impedance, $Z_{Th}$, becomes $(b+e)\|(a+f)$. As shown in Fig.~\ref{fig:Fig6}(b) this also produces a scaled-mirror symmetrical divider about $V_{Th}$ (which can be shown by splitting $c$ into two impedances, $ca/(a+b)$ and $cb/(a+b)$, and joining them at the $V_{Th}$  node. The values shown in Fig.~\ref{fig:Fig6}(c) are:
\begin{equation}\label{eq:4}
V_{Th}=V,\ \ Z_{Th}=b^\prime+e^\prime=\left(b+e\right)\frac{a}{a+b}.
\end{equation}

This equivalent circuit applies for any constant-Z bridged-T network. For constant-R T-coils, the transfer function of interest is to the capacitor $C$ (usually part of a transistor model) within branch $d$. For symmetrical networks the Th\'evenin impedance reduces to one-half of the impedances of the $b+e$ branches. For asymmetrical networks, the Th\'evenin impedance also produces reduced order transfer functions where applicable (sometimes after applying some constant-Z identities). Fig.~\ref{fig:Fig6}(a) suggests that poles associated with branch $c$  do not appear in the transfer function. The conductance and capacitance elements within branches $a+b$  in \eqref{eq:4} can be cancelled, as shown later.

\subsection{General T-coil Equations}\label{sect:IV}

\subsubsection{Root Locus}

The Th\'evenin equivalent circuit approach produces second order transfer functions for all T-coils in Fig.~\ref{fig:Fig2} and Fig.~\ref{fig:Fig3} and can be parameterized in terms of the bridging capacitor $C_B$  for the complete range of solutions. Also, $G_P=1/R_P$.

\begin{subequations}\label{eq:5}
\begin{align}
\frac{V_C}{V}&=\frac{{1}/{\left(G_P+sC\right)}}{Z_{Th}+R_S+sL_3+\frac{1}{\left(G_P+sC\right)}} \label{eq:5a}\\
\frac{V_C}{V}&=\frac{1}{B_0+B_1s+B_2s^2}=\frac{1}{B_0+\left(D_0+D_1C_B\right)s+D_2C_Bs^2}. \label{eq:5b}
\end{align}
\end{subequations}

\begin{figure}
  \centering
  \includegraphics[width=\linewidth]{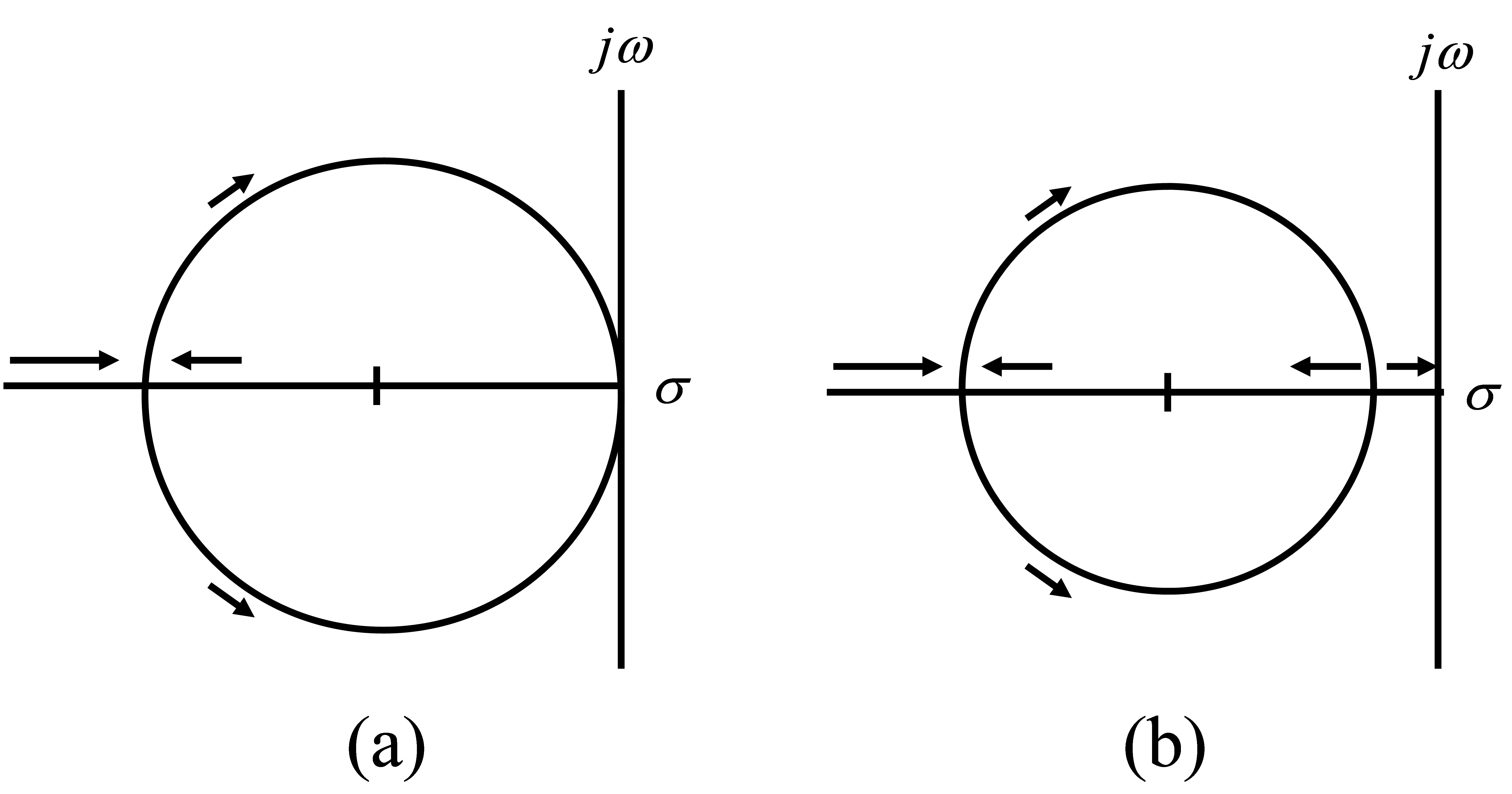}\\
  \caption{Root locus on complex $s=\sigma+j\omega$ plane showing increasing $C_B$. (a) Without $R_P$ ($G_P=0$). (b) With $R_P$.}\label{fig:Fig7}
\end{figure}

In Fig.~\ref{fig:Fig7}, the complex poles of interest are shown for increasing $C_B$ and lie on a circle with
\begin{equation}\label{eq:6}
\mathrm{center}\ =-\frac{B_0}{D_0}, \ \ \    \mathrm{radius}\ =\frac{B_0}{D_0}\sqrt{1-\frac{D_0D_1}{B_0D_2}}.
\end{equation}
Fig.~\ref{fig:Fig6}(b) shows the case where the circle defined by \eqref{eq:6} is to the left of the origin because $D_1>0$ when $R_P$ exists.

A design process is to solve for the poles of \eqref{eq:5} and then find $C_B$ for a specified pole-angle measured from the negative real axis. Both poles are found by solving quadratic equations. Elements and terms for symmetrical and asymmetrical transfer functions in \eqref{eq:5} are given next. The $L_3$  value varies with $C_B$ (and the pole-angle) as shown in \eqref{eq:10} and \eqref{eq:19} below. A final step is to calculate physical inductor values and a coupling coefficient.

\subsubsection{Symmetrical T-coils}

The T-coil equations for the general case in Fig.~\ref{fig:Fig2}(d) are expressed using $G_P=1/R_P$   and $G_B=1/R_B$  to support omitted resistor simplifications with zero-valued $G_P$ and/or $G_B$  elements.

\paragraph{Constant-R Elements}
\begin{align}
R_1&=R_2=R^2G_p/2 \label{eq:7}\\
G_B&=R_S/R^2+G_P/4 \label{eq:8}\\
L_1&=L_2=R^2C/2 \label{eq:9}\\
L_3&=R^2C_B-L_1/2 \label{eq:10}.
\end{align}

\paragraph{Transfer Function Terms}
\begin{align}
	B_0&=1+G_P(R/2+R^2G_P/4+R_S)	\\
  	D_0&=\left(2R+R^2G_P+{4R}_S\right)C/4	\\
	D_1C_B&=R^2G_pC_B	\\
	D_2C_B&=R^2CC_B	.
\end{align}

\subsubsection{Asymmetrical T-coils}

The T-coil equations for the general case in Fig.~\ref{fig:Fig3}(d) are presented.

\paragraph{Constant-R Elements}
\begin{subequations}\label{eq:15}
\begin{align}
	R_2&=\frac{R^2G_P-R_1\left[1+\left(R_S+R\right)G_P\right]}{1+(R_S+{R_1-R)G}_P}	\label{eq:15a}\\
	R_1&=\frac{R^2G_P-R_2\left[1+\left(R_S-R\right)G_P\right]}{1+(R_S+{R_2+R)G}_P}	\label{eq:15b}\\
	R_1+R_2&=\frac{\left(R-R_1\right)^2G_P}{1+\left(R_S+R_1-R\right)G_P}	\label{eq:15c}\\
	R_1+R_2&=\frac{{(R+R_2)}^2G_P}{1+(R_S+R_2+R)G_P}	 \label{eq:15d}
\end{align}
\end{subequations}

\begin{subequations}
\begin{align}
L_TG_P&=\left(L_1+L_2\right)G_P=\left(R_1+R_2\right)C	\label{eq:16a}\\
	L_T&=\frac{\left(R-R_1\right)^2C}{1+\left(R_S+R_1-R\right)G_P}	\label{eq:16b}\\
	L_T&=\frac{{(R+R_2)}^2C}{1+(R_S+R_2+R)G_P}	\label{eq:16c}
\end{align}
\end{subequations}

\begin{align} L_1&=\frac{(R_1-R)(R_1+R_S-R)C}{1+\left(R_1+R_S-R\right)G_P+\sqrt{1+\left(R_1+R_S-R\right)G_P}}	\label{eq:17}\\	L_2&=\frac{(R_2+R)(R_2+R_S+R)C}{1+\left(R_2+R_S+R\right)G_P+\sqrt{1+\left(R_2+R_S+R\right)G_P}}	\label{eq:18} \\
L_3&={R^2C_B-L}_1L_2/(L_1+L_2).	\label{eq:19}
\end{align}

\paragraph{Transfer Function Terms}

\begin{align} 	
B_0&=1+R_SG_P+R_1[1+R+R_S+R_2G_P]R+R_2 \label{eq:20}\\
D_0&=L_2R[1+(R_1+R_S-R)G_P](R-R_1)2 \label{eq:21}\\
D_1C_B&=R^2G_pC_B	\label{eq:22}\\
D_2C_B&=R^2CC_B	\label{eq:23}.
\end{align}

The resistance constraint is implemented in \eqref{eq:15a} after selecting $R_1$; \eqref{eq:15b} is an equivalent arrangement. Summations \eqref{eq:15c} and \eqref{eq:15d} are derived from \eqref{eq:15a} and \eqref{eq:15b} respectively. The summations \eqref{eq:15c} and \eqref{eq:15d} are used in \eqref{eq:16a} to derive \eqref{eq:16b} and \eqref{eq:16c} such that zero-valued elements (besides $R$) are allowed. These forms are useful to derive \eqref{eq:17} and \eqref{eq:18}.  Also, from \eqref{eq:15a}, $R_1$ is limited to a range where $R_2\geq 0$.  If $G_P=0$, $R_1=R_2=0$.

From \eqref{eq:4}, \eqref{eq:5a}, and \eqref{eq:16a}, the transfer function $V_C/V$  in \eqref{eq:5} remains second order, as shown in \eqref{eq:5b}. The $sC+G_P$ term in the product $(sC+G_P ) Z_{Th}$ drops out because in $Z_{Th}$, $a+b=s(L_1+L_2 )+(R_1+R_2)=(R_1+R_2)(sC+G_P ))/G_P$.
If $G_P=0$, then from \eqref{eq:15c} or \eqref{eq:15d}, $(R_1+R_2)/G_P =R^2$.

\subsection{Example}

Table I shows a set of load values and the designs for all T-coils. Elements at and above the Gain line remain unchanged for each Fig.~\ref{fig:Fig2} or Fig.~\ref{fig:Fig3} configuration. Blank entries represent omitted elements for simpler structures. Element values (negative values in parenthesis) for stated pole angles are shown below single solid lines along with corresponding parameters include bandwidth (BW) and bandwidth extension ratio (BWER) relative to a $1+RCs$  single-pole bandwidth.

\begin{table*}
\caption{T-coil example for all configurations. $R=50\ \Omega$, $C=4$ pF, $R_S=10\ \Omega$, $P_P=500\ \Omega$, Pole Angle $=30^{\circ}\ \& \ 45^{\circ}$, $R_1$ is selected for Fig.~\ref{fig:Fig3}(d).} \label{table}
\centering
  \includegraphics[width=\linewidth]{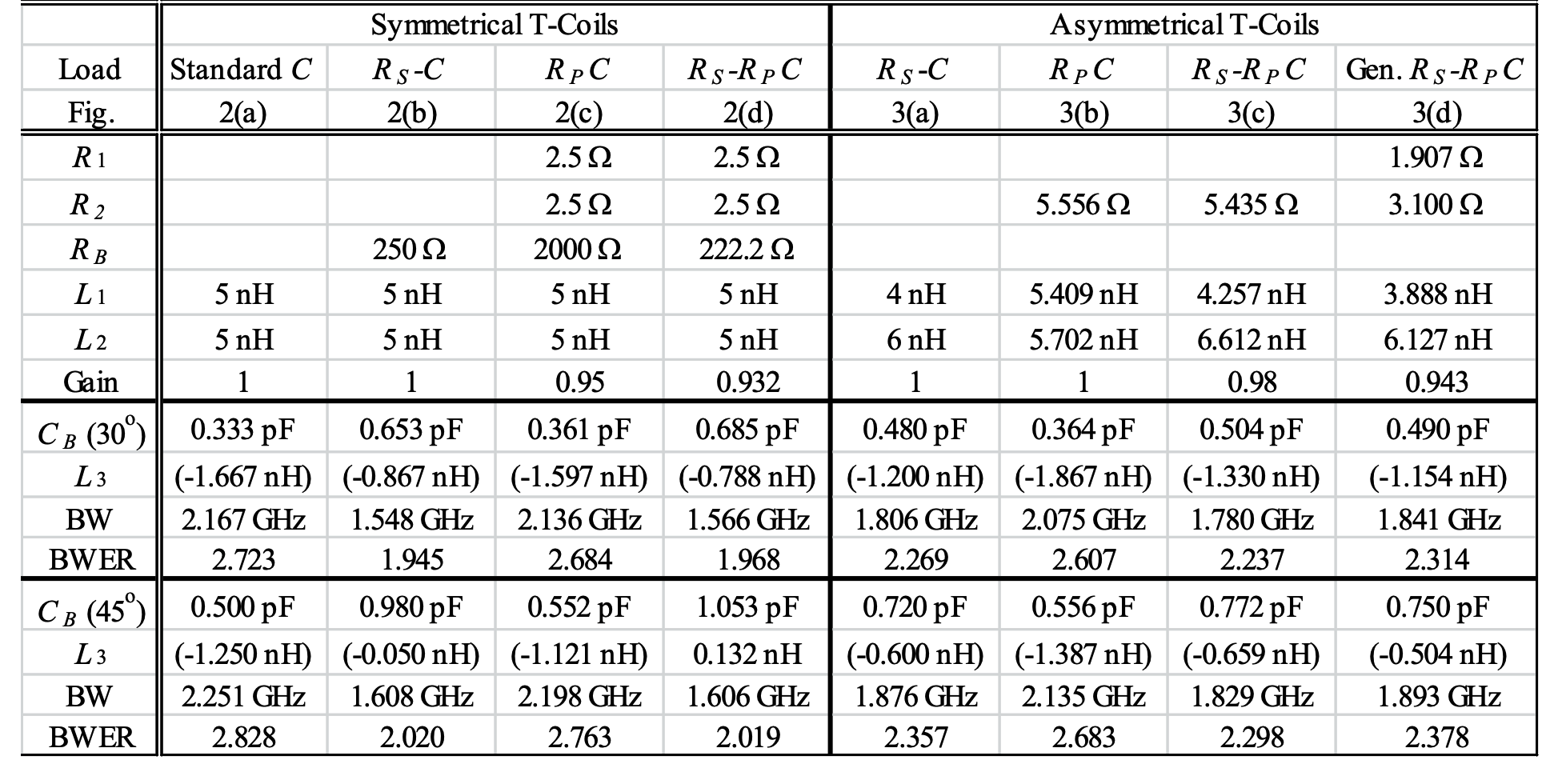}
\end{table*}

\section{Conclusions}\label{sect:conclusion}

In this paper, we have presented a survey of Wang algebra and its applications to T-coils. A Th\'evenin equivalent circuit method simplifies transfer function extraction of constant-Z bridged-T networks. Because of a balanced-bridge process, the Th\'evenin equivalent circuit produces a final result with common poles and zeros already cancelled. Two sets of general constant-R bridged T-coil equations are derived, and a design process is suggested.

Even greater bandwidth extensions have been reported by trading off the constant-R property and controlling other parameters \cite{Shekhar,Walling}. These extensions are outside of the scope of this paper.


\section*{Acknowledgment}

The first author acknowledges C. R. Battjes for his original work in designing T-coils on printed circuits, on hybrid IC substrates, and in ICs; for teaching the T-coil design methods within Tektronix, Inc.; and for inspiring others to advance the technology. The first author also thanks Dr. Ivan Frisch, Professor Emeritus at NYU, for teaching Wang algebra in a course at the University of California, Berkeley. The second author would like to thank Dr. Jin-Hai Guo for providing Ki-Tung Wang's photo and several references.

\appendices

\section{Symmetrical Constant-Z Derivations}

The symmetrical network equations \eqref{eq:7}-\eqref{eq:10} can be derived for the Fig.~\ref{fig:Fig2}(d) elements with the substitutions in \eqref{eq:A1} entered in \eqref{eq:3}:
\begin{equation}\label{eq:A1}
\begin{split}
a&=b=sL_1+R_1, \\
\frac{1}{c}&=\ sC_B+G_B, \\
d&=sL_3+R_S+\frac{1}{\left(sC+G_P\right)},\\
e&=R.
\end{split}
\end{equation}
This substitution expands to the product terms in \eqref{eq:A2}:
\begin{equation}\label{eq:A2}
\begin{split}
&s^3\left(-2CC_BL_1R^2+CL_1^2+2{CL_1L}_3\right)+ \\
&s^2(-2CG_BL_1R^2-2C_B{G_PL}_1R^2-2CC_BR^2R_1 \\
&+G_PL_1^2+2G_PL_1L_3+2CL_1R_1+2CL_3R_1+2CL_1R_S\\
&s^1(-2G_BG_PL_1R^2-2CG_BR^2R_1-2C_BG_PR^2R_1\\
&-CR^2+2G_PL_1R_1+2G_PL_3R_1+CR_1^2 \\
&+2G_PL_1R_S+2CR_1R_S+2L_1)+\\
&s^0(-2G_BG_PR^2R_1-G_PR^2+G_PR_1^2+2G_PR_1R_S+2R_1)\\
&=0.
\end{split}
\end{equation}

One set of steps is outlined.  Commercial or free symbolic mathematical tools can automate the reductions by doing substitutions and cancellations. SageMath (a free, open-source tool created by academia and available in the cloud or for downloading at www.sagemath.org) is used in these steps.

The $s^3$ multiplier terms are set to zero to produce \eqref{eq:10}.  After substituting for $L_3$, the $s^2$ multiplier terms are simplified to yield a preliminary expression for $R_1$ that includes $G_B$ and also to yield \eqref{eq:8} and \eqref{eq:9} directly from the simplified $s^1$  and $s^0$ multiplier terms that are set to zero.  Finally, \eqref{eq:8} is used to simplify the previous $R_1$ result to produce \eqref{eq:7}.

To form the transfer function as a function of $C_B$, \eqref{eq:9} is also used so that $C_B$ can range from zero to infinity.  This provides a complete solution space covering real and complex poles. Values of $C_B$ (and $L_3$) can be determined for complex poles and selected pole angles, as shown in Table I.

\section{Asymmetrical Constant-Z Derivations}

The asymmetrical network equations \eqref{eq:15}-\eqref{eq:19} can be derived for the Fig.~\ref{fig:Fig3}(d) elements with the substitutions in \eqref{eq:B1} entered in \eqref{eq:2}:
\begin{equation}\label{eq:B1}
\begin{split}
a&=sL_1+R_1, \\
b&=sL_2+R_2, \\
\frac{1}{c}&=\ sC_B, \\
d&=sL_3+R_S+\frac{1}{\left(sC+G_P\right)},\\
e&=R.
\end{split}
\end{equation}

This substitution expands to the product terms in \eqref{eq:B2}:
\begin{equation}\label{eq:B2}
\begin{split}
&s^3(-CC_BL_1R^2-CC_BL_2R^2+CL_1L_2+{CL_1L}_3)\\
&+{CL_2L}_3)+\\
&s^2(-{C_BG}_PL_1R^2-{C_BG}_PL_2R^2-CC_BR^2R_1\\
&-CC_BR^2R_2+G_PL_1L_2+G_PL_1L_3\ {+\ G}_PL_2L_3\\
&+CL_1R+CL_2R+CL_2R_1+CL_3R_1+\ CL_1R_2\\
&+CL_3R_2+CL_1R_S+CL_2R_S)+\\
&s^1(-C_BG_P{R^2R}_1-C_BG_P{R^2R}_2+G_PL_1R \\
&+G_PL_2R+CR^2+G_PL_2R_1+G_PL_3R_1+CRR_1\\
&+G_PL_1R_2+G_PL_3R_2-CRR_2\\
&+CR_1R_2+G_PL_1R_S+G_PL_2R_S\\
&+CR_1R_S+CR_2R_S+L_1+L_2)+\\
&s^0({-\ G}_PR^2+G_PRR_1-G_PRR_2+G_PR_1R_2\\
&+G_PR_1R_S+G_PR_2R_S+R_1+R_2)=0.
\end{split}
\end{equation}
One set of steps is briefly outlined. Some of the later steps are done manually. The $s^3$ multiplier terms are set to zero to produce \eqref{eq:19}. The $s^0$  multiplier terms are set to zero to produce \eqref{eq:15a} and \eqref{eq:15b}. After replacing $L_3$ with \eqref{eq:19}, all terms with $L_3$ and $C_B$ elements are eliminated in a modified expansion.

In this expansion, the $s^2$ multiplier terms yield \eqref{eq:B3} after they are set to zero and divided by $C$:
\begin{equation}\label{eq:B3}
L_2^2\left(R_S+R_1-R\right)+2L_1L_2R_S+L_1^2\left(R_S+R_2+R\right)=0.
\end{equation}
The $s^1$ multipliers are simplified with two substitutions. The terms that are multiplied by $G_P$ are identical to the terms in the left side of \eqref{eq:B3} and are removed.  The remaining terms that are multiplied by $C(L_1+L_2)$ are replaced because they are identical to the first six terms of the $s^0$ multiplier in \eqref{eq:B2} after these six terms are divided by $-G_P$. The mathematical process produces $(L_1+L_2)(R_1+R_2)/G_P$, which is set equal to a remaining $(L_1+L_2)^2$. Equation \eqref{eq:16a} follows after dividing both sides by $(L_1+L_2)$.

A quadratic equation formula is used to solve \eqref{eq:B3} for either $L_2$ in \eqref{eq:B4} or $L_1$ in \eqref{eq:B5} in terms of the other inductor. Each solution has the same six product terms under the radical as the first six $s^0$ multiplier terms divided by $-G_P$  in \eqref{eq:B2}. These terms can be simplified as described above to form $\sqrt{(R_1+R_2)/G_P}$.
\begin{align}
	L_2&=L_T-L_1=L_1\frac{-R_S+\sqrt{(R_1+R_2){/G}_P}}{R_S+R_1-R},	\label{eq:B4}\\
	L_1&=L_T-L_2=L_2\frac{-R_S+\sqrt{(R_1+R_2){/G}_P}}{R_S+R_2+R}.	\label{eq:B5}
\end{align}
After doing some mathematical simplification, \eqref{eq:17} follows from \eqref{eq:B4}, \eqref{eq:15c}, and \eqref{eq:16b}. Equation \eqref{eq:18} follows from \eqref{eq:B5}, \eqref{eq:15d}, and \eqref{eq:16c}.

Equations \eqref{eq:17} and \eqref{eq:18} are formed with positive radicals in \eqref{eq:B4} and \eqref{eq:B5}. The negative radical solutions produce divide by zero cases if $G_P=0$.

As done in the symmetrical case, \eqref{eq:19} is substituted to express the transfer denominator \eqref{eq:20}-\eqref{eq:23} as a function of $C_B$.

\footnotesize
\bibliographystyle{IEEEtran}
\bibliography{IEEEabrv,WangAlgebra}

%





\end{document}